\newcommand{\F}{\mathbb{F}}
\newcommand{\SF}{\mathbb{S}}
\newcommand{\PSF}{\mathbb{PS}}
\newcommand{\Tr}{\mathrm{Tr}}
\newcommand{\adeg}{\mathrm{d}^{\circ}}
\newtheorem{theorem}{Theorem}
\newtheorem{lemma}{Lemma}
\newtheorem{remark}{Remark}
\newcommand{\numberlist}[2][\linewidth]{%
  \parbox[t]{#1}{\printcommalist{#2}}%
}
\newcommand{\printcommalist}[1]{%
  \begingroup\lccode`~=`,\lowercase{\endgroup\def~}{\mathcomma\penalty0 }%
  \mathcode`,="8000
  \thinmuskip=6mu plus 6mu minus 2mu
  \tiny
  $#1$
}
\mathchardef\mathcomma=\mathcode`,
\title{On a classification of planar functions in characteristic three}
\author[1]{Samuele Andreoli}
\author[1]{Lilya Budaghyan}
\author[2]{Robert Coulter}
\author[1]{Alise Haukenes}
\author[1]{Nikolay Kaleyski}
\author[1]{Enrico Piccione}
\affil[1]{University of Bergen, Norway, \{name.surname\}@uib.no}
\affil[2] {Department of Mathematical Sciences, University of Delaware, Newark, DE, 19716, USA, coulter@udel.edu}
\date{}
\begin{document}

\maketitle
\begin{abstract}


Planar functions are functions over a finite field that have optimal combinatorial properties and they have applications in several branches of mathematics, including algebra, projective geometry and cryptography. There are two relevant equivalence relations for planar functions, that are isotopic equivalence and CCZ-equivalence. Classification of planar functions is performed via CCZ-equivalence which arises from cryptographic applications. In the case of quadratic planar functions, isotopic equivalence, coming from connections to commutative semifields, is more general than CCZ-equivalence and isotopic transformations can be considered as a construction method providing up to two CCZ-inequivalent mappings. In this paper, we first survey known infinite classes and sporadic cases of planar functions up to CCZ-equivalence, aiming to exclude equivalent cases and to identify those with the potential to provide additional functions via isotopic equivalence. In particular, for fields of order $3^n$ with $n\le 11$, we completely resolve if and when isotopic equivalence provides different CCZ-classes for all currently known planar functions. Further, we perform an extensive computational investigation on some of these fields and find seven new sporadic planar functions over $\F_{3^6}$ and two over $\F_{3^9}$. Finally, we give new simple quadrinomial representatives for the Dickson family of planar functions.

\end{abstract}

{\bf Keywords: }planar functions, semifields, classification, equivalence

\section{Introduction}
Functions over finite fields have numerous applications in many areas of research within mathematics and computer science.
Notably, they have been heavily studied for the design of cryptographic ciphers.
For many years, the design of cryptographic ciphers has been mostly based on binary fields, due to the efficiency of implementing binary operations in hardware and software. More recently there has been a renewed interest in the case of odd characteristic, especially for building MPC-friendly cryptography primitives and in the field of side-channel countermeasures, see Grassi, Rechberger, Rotaru, Scholl and Smart \cite{GRRSS-2016-mfskp} or Masure, M\' eaux, Moos and Standaert \cite{MMMS-2023-eaemi} for example.
Thus, it seems desirable to investigate functions over finite fields of odd characteristic with useful cryptographic properties.
In 1989, Meier and Staffelbach \cite{MS-1989-nlccs} showed the importance of using highly nonlinear functions in designing cryptographic primitives.
Moreover, after the discovery of differential cryptanalysis by Biham and Shamir \cite{BS-1991-dcodl} in 1991,
Nyberg \cite{N-1993-dumic} introduced the mathematical concept of differential uniformity of a function between
finite fields with the idea that functions with optimal differential uniformity would provide the strongest resistance to differential cryptanalysis.
In binary fields, the optimal differential uniformity is $2$, and functions obtaining this optimal value are called Almost Perfect Nonlinear (APN) functions. Over fields of odd characteristics, the optimal differential uniformity is 1, and functions that obtain this value are called Perfect Nonlinear (PN) or planar functions.
The applications of planar functions, however, are not limited to their possible uses in cryptography. Indeed, when Dembowski and Ostrom \cite{DO-1968-poowc} first introduced planar functions in 1968 as tools for constructing projective planes admitting collineation groups with specific properties. Additionally, Coulter and Henderson \cite{CH-2008-cpas} established a 1-to-1 correspondence between commutative presemifields of odd order and quadratic planar functions. This connection was subsequently used by Budaghyan and Helleseth \cite{BH-2011-ncsdb} to construct the first new infinite family of commutative semifields of arbitrary odd characteristic since the 1950s.


Information about the known families and constructions of planar functions (and commutative semifields) is scattered. There has not been a serious attempt to provide an exhaustive survey of the known classes over small fields since 2010, when Pott and Zhou \cite{PZ-2010-scopf} gave an
account of what was known up to CCZ-equivalence for fields of order $3^n$ with $n\le 6$. There is also some confusion over the historical record, in part because some papers introducing ``new" examples of planar functions have failed to fully address the question of equivalence with known classes.
A central motivation of this paper is to rectify the above two problems by presenting, as best we can, a complete picture of the current state of knowledge with regards to planar functions over fields of order $3^n$ with $n\le 11$. As part of this account, we give a nomenclature of the known families of planar functions, which is as accurate as it can be. With this, we hope to provide a complete account of the current situation, including whatever is known about the intersections between the known families. Following a section on background material and notation, this nomenclature is given in Section 3 along with explanations and remarks.

In Section 4 we introduce new planar DO polynomials representing the Dickson semifields. The best previous representatives for the entire class were all hexanomials. We use a linear transformation on that previous class to reduce them to quadrinomials. Our motivation stems from the fact that most computations involving equivalence are made easier by two things, namely reducing the number of terms of the polynomials involved and restricting the coefficients of the polynomials to as small a subfield as possible.

Finally, in Section 5 we turn to our central goal of giving a complete account of the current knowledge of planar functions, up to CCZ-equivalence, over fields of order $3^n$ with $n\le 11$. 
The latest effort to produce such a classification, to the best of our knowledge, was made by Pott and Zhou in 2010 \cite{PZ-2010-scopf}, who gave a classification up to CCZ-equivalence of all planar functions known at the time in characteristic 3, for dimensions up to 6. We update this classification, adding the representatives of families and sporadic instances discovered in the intervening years.
Testing for equivalence in dimensions larger than 6 has been exceedingly difficult until the recent discovery of a more efficient equivalence test by Ivkovic and Kaleyski \cite{IK-2022-darle}.
Using this improved equivalence test, we have expanded the classification up to dimension 11. Notably, we find that the Coulter-Henderson-Cosick(CHK) semifield \cite{CHK-2007-ppfcs}, considered a sporadic instance until now, belongs to the Budaghyan-Helleseth family \cite{BH-2008-npnmo}.
We classify planar functions up to CCZ-equivalence because it is the natural equivalence relation to consider when investigating differential properties.
However, for quadratic planar functions, there is a more general equivalence relation based on the isotopy of the corresponding semifields, namely isotopic equivalence.
The families presented in Section 3 are equivalent with respect to isotopic equivalence. This means that a complete classification, consistent with the list of families provided is only possible if we investigate the number of CCZ-classes (or, equivalently, strong isotopic equivalence classes), inside the isotopic equivalence classes.
The Coulter-Henderson Theorem \cite{CH-2008-cpas} provides necessary conditions for an isotopic equivalence class to split into two strong isotopic equivalence classes, as well as conditions on the form of isotopisms that are not also strong isotopisms.
Using the latter, we can computationally prove whether an isotopy class splits into two strong isotopy classes for $n\le 11$.
We do so either by finding a split or by exhausting the search space for possible isotopisms. We find that some of the instances of the Budaghyan-Helleseth (BH) family and the instances of the Cohen-Ganley (CG) family over $\F_{3^8}$ define isotopic equivalence classes that split into two strong isotopic equivalence classes. To the best of our knowledge, this was not previously known. The complete account, along with tables of the invariants used, is reported in the tables of Appendix \ref{app:main}.
With this in place, we proceed to search for new planar functions up to CCZ-equivalence.
Our approach is to perform an expansion search, a procedure analogous to the one employed by Aleksandersen, Budaghyan and Kaleyski \cite{ABK-2022-sfafb} for APN functions. This procedure, and the complete results of the searches, are outlined in Section 5.1.
As a result of this search, we find seven new CCZ-inequivalent examples of sporadic planar functions over $\F_{3^6}$ and two over $\F_{3^9}$.
Finding so many new examples of planar functions over such a low dimension is a little surprsing, and we believe a more involved computational search might reveal even more instances. Additionally, we find that some of the new instances belong to an isotopy class that splits into two CCZ-equivalence classes.

\section{Preliminaries}

This section introduces the necessary background on planar function and semifields, as well as establishing notation. We refer the reader to Budaghyan \cite[Chapter 2]{B-2014-caaoc} for further details.

\subsection{Planar functions}

Let $p$ be an odd prime and $n$ be a positive integer. We denote by $\F_{p^n}$ the finite field with ${p^n}$ elements, and by $\F_p^n$ the $n$-dimensional vector space over $\F_p$. It is well known that one can identify the vector space $\F_{p}^n$ with the field $\F_{p^n}$ through the use of a basis over $\F_{p}$. Let $k$ be a divisor of $n,$ we denote the trace function of $\F_{p^n}$ over $\F_{p^k}$ as $\Tr^n_k(x)=x+x^{p^k}+\cdots +x^{p^{n-k}}$. Moreover, we denote by $\Tr=\Tr_n=\Tr^{n}_1$ the absolute trace function. 

Any function $F\colon\F_p^n\to\F_p^n$ can be represented uniquely as a univariate polynomial $F(x) \in \F_{p^n}\left[X\right]$ of the form
\begin{equation}\label{eq:univariate}
    F(x)=\sum_{i=0}^{p^n-1}\alpha_i x^i,
\end{equation}
where $\alpha_i \in \F_{p^n}$. For any positive integer $i$ strictly less than $p^n$, we set $\mathrm{d}_p(i)$ to be equal to $\sum_{j=0}^{n-1}c_j$ for an appropriate choice of $0 \leq c_j < p$ such that $i = \sum_{j=0}^{n-1}c_jp^j$.
We denote by $\adeg(F)$ the \emph{algebraic degree} of $F$, that is defined as
$$\adeg(F)=\max\{\mathrm{d}_p(i)\colon 0\leq i<p^n,\,\alpha_i\neq0 \}$$
where the $\alpha_i$ are as in \eqref{eq:univariate}.
If $F$ has algebraic degree at most $1$, then $F$ is called \emph{affine} and if additionally $F(0)=0$ we also say that $F$ is \emph{linear}. If $F$ has algebraic degree $2$ (resp. $3$), then $F$ is said to be \emph{quadratic} (resp. \emph{cubic}). The function $F$ is called \emph{Dembrowski-Ostrom (DO) polynomial} if all the non-zero terms in its polynomial representation have algebraic degree $2$, that is
$$F(x) = \sum_{0\leq i\leq j<p^n} \alpha_{i,j} x^{p^j+p^i}$$
where $\alpha_{i,j}\in\F_{p^n}$.
We denote by $D_aF(x)$ the \emph{derivative} of $F$ in direction $a\in \F_{p^n}\setminus\{0\}$,
$$D_aF(x)=F(x+a)-F(x).$$
Let $\delta$ be a positive integer. A function $F$ is differentially $\delta$-uniform if 
$$\delta \geq \max_{a,b\in \F_{p^n},\, a\neq 0}|\{x \in \F_{p^n} \colon D_aF(x)=b\}|.$$
A $1$-uniform function is also called a \emph{Perfect Nonlinear (PN)} function or \emph{planar} function. In the case of $F$ being a DO polynomial, then $F$ is planar if and only if for all $a\in\F_{p^n}\setminus\{0\}$ the only solution to the equation $D_aF(x)=D_aF(0)$ is $x=0.$

\subsection{Semifields}

Let $S$ be a nonempty set and $+,\star$ be two binary operations over $S$. Then $(S,+,\star)$ is called a \emph{presemifield} if the following holds
\begin{itemize}
    \item $(S,+)$ is an Abelian group; 
    \item For any $a,b\in S$, we have that  $(a+b) \star c = (a \star c) + (b \star c)$ and that $a \star (b+c) = (a \star b) + (a \star c)$;
    \item If $a,b,c\in S$ are such that $a\star b=0$, then $a=0$ or $b=0$.
\end{itemize}
If there exists $1_S\in S$ such that $1_S \star a = a \star 1_S = a$ for all $a\in S$, then $(S,+,\star)$ is called a \emph{semifield}. In 1965, Knuth \cite{K-1965-fsapp} showed that the additive group of a presemifield is isomorphic to the additive group of a finite field $\F_q$. So we can always represent a presemifield as $(\F_q,+,\star)$, where $+$ is the usual addition over the finite field $\F_q$. Moreover, any finite field is also a semifield with the usual multiplication.

Two presemifields $\PSF_1 = (\F_q,+,\star)$ and $\PSF_2 = (\F_q,+,*)$ are called \emph{isotopic} if there are three linear permutations $L$, $M$, and $N$ of $\F_q$ such that $ L(x\star y)=M(x) * N(y)$ for any $x,y\in \F_q$. If $M=N$, then $\PSF_1$ and $\PSF_2$ are called \emph{strongly isotopic}. Every finite commutative presemifield $\PSF=(\F_q,+,\star)$ is isotopic to a finite commutative semifield $\SF=(\F_q,+,*)$ where we choose any $a\in \F_q\setminus\{0\}$ and we set
$$(x\star a)*(y\star a)=x\star y.$$
Then $1_{\SF}=a\star a$. We observe that $\PSF$ and $\SF$ are strongly isotopic by using the transformation $(L,M,N)=(\mathrm{id},\mathrm{id}\star a,\mathrm{id}\star a)$ where $\mathrm{id}$ is the identity function over $\F_q$.

The \emph{left, middle} and \emph{right nucleus} of a finite semifield $\SF = (\F_q,+,\star)$ are denoted by, respectively,
\begin{align*}
    N_\ell(\SF) & = \left\{\alpha\in\F_q \colon (\alpha\star x) \star y = \alpha \star (x \star y) \text{ for all } x,y \in \F_q\right\},\\
    N_m(\SF) & = \left\{\alpha\in\F_q \colon (x\star \alpha) \star y = x \star (\alpha \star y) \text{ for all } x,y \in \F_q\right\},\\
    N_r(\SF) & = \left\{\alpha\in\F_q \colon (x\star y) \star \alpha = x \star (y \star \alpha) \text{ for all } x,y \in \F_q\right\}.
\end{align*}
We denote by $N(\SF)=N_\ell(\SF )\cap N_m(\SF) \cap N_r(\SF)$ the \emph{nucleus} of $\SF$. If $\SF_1$ and $\SF_2$ are isotopic, then their nuclei (left, middle, right, and the nucleus) have the same order. We have that if $\SF$ is commutative, then $N(\SF)=N_\ell(\SF )= N_r(\SF)$ because $N_\ell(\SF )= N_r(\SF)$ and $N_\ell(\SF )\subseteq N_m(\SF)$.

When $q$ is odd, Coulter and Henderson \cite{CH-2008-cpas} showed there is a 1-to-1 correspondence between finite commutative semifields and planar DO polynomials which can be realised in the following way. From any finite commutative presemifield $(\F_q,+\star)$ we can obtain a planar DO polynomial $F\in\F_q\left[X\right]$ by $F(x)=\frac{1}{2} (x\star x)$. Conversely, any planar DO polynomial $F\in\F_q\left[X\right]$ defines a commutative presemifield $(\F_q,+,\star)$ via field addition and multiplication given by $x\star_F y = F(x+y)-F(x)-F(y)$.

\subsection{Equivalence relations of functions}\label{sec:preliminaries:equivalence}

Let $F$ and $G$ be two functions over $\F_{p^n}$. The functions $F$ and $G$ are said to be:
\begin{itemize}
    \item \emph{affine equivalent} (resp. \emph{linear equivalent}) if there are two affine (resp. linear) permutations $A_1$ and $A_2$ over $\F_{p^n}$ such that $G = A_1 \circ F \circ A_2$;
    \item \emph{Extended Affine equivalent}, or \emph{EA-equivalent,} if there is an affine function $A$ over $\F_{p^n}$ such that $G$ is affine equivalent to $F+A$.
    \item \emph{Carlet, Charpin and Zinoviev equivalent}, or \emph{CCZ-equivalent,} if there is an affine permutation $\mathcal{A}$ over $\F_{p^n}^2$ mapping $\Gamma_F$ to $\Gamma_G$. Where by $\Gamma_F$ we denote the graph of the function $F$, $\Gamma_F=\left\{(x, F(x))\colon x \in \F_{p^n}\right\}$.
\end{itemize}
We specify that CCZ-equivalence is strictly more general than EA-equivalence (any pair of functions that are EA-equivalent are also CCZ-equivalent, but not necessarily vice-versa); EA-equivalence is strictly more general than affine equivalence; and affine equivalence is strictly more general than linear equivalence.

Dempwolff \cite{U-2018-ceopf} proved that two monomial functions $F(x)=x^d$ and $G(x)=x^e$ are CCZ-equivalent if and only if they are cyclotomic equivalent, that is there exists a positive integer $i$ such that $d=p^ie\pmod{p^n-1}$ or $d=p^ie^{-1}\pmod{p^n-1}$ if $e$ is invertible modulo $p^n-1$. Budaghyan and Helleseth \cite{BH-2008-npnmo} proved that the equivalence relations linear, affine, EA, and CCZ all coincide for planar DO polynomials over $\F_{p^n}$. Two planar DO polynomials $F$ and $G$ are strongly isotopic if and only if they are CCZ-equivalent, and any isotopic class contains at most two CCZ-equivalence classes. The following theorem gives more insights on this topic.
\begin{theorem}[{Coulter Henderson \cite{CH-2008-cpas}}]\label{thm:CH}
    Let $\PSF$ and $\PSF'$ be two commutative presemifields where $p$ is a prime and $n$ is a positive integer. Let $\SF=(\F_{p^n},+,\star)$ and $\SF'=(\F_{p^n},+,*)$ be two commutative semifields strongly isotopic respectively to $\PSF$ and $\PSF'$. Suppose that $\PSF$ and $\PSF'$ are isotopic. Let $|N_m(\SF)|=p^{m}$ and $|N(\SF)|=p^{k}$. Then one of the following holds
    \begin{itemize}
        \item If $m/k$ is odd, then $\PSF$ and $\PSF'$ are strongly isotopic.
        \item If $m/k$ is even, then either $\PSF$ and $\PSF'$ are strongly isotopic or all isotopisms $(L,M,N)$ from $\SF$ to $\SF'$ are such that $M=\alpha \star N$ where $\alpha\in N_m(\SF)$ is a non-square (w.r.t. to the multiplication $\star$) or, equivalently, $\alpha\in N_m(\SF)\setminus N(\SF)$. 
    \end{itemize}
    Moreover, if $n$ is odd, the notion of strong isotopism coincides with the one of isotopism for commutative presemifields.
\end{theorem}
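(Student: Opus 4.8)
The plan is to reduce the claim to the level of the two chosen semifields and then to analyse an arbitrary isotopism $\SF\to\SF'$ by comparing its components $M$ and $N$, which commutativity forces to agree up to a middle-nucleus scalar. First, since the inverse of a strong isotopism is again one and strong isotopisms compose, both isotopy and strong isotopy are equivalence relations; as $\PSF$ is strongly isotopic to $\SF$ and $\PSF'$ to $\SF'$, the pair $\PSF,\PSF'$ is isotopic (resp.\ strongly isotopic) exactly when $\SF,\SF'$ is. So it suffices to argue with $\SF=(\F_{p^n},+,\star)$ and $\SF'=(\F_{p^n},+,*)$, where moreover $*$ has an identity $e'$.

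The core is the following. Let $(L,M,N)\colon\SF\to\SF'$ be an isotopism. Commutativity of both $\star$ and $*$ gives $L(x\star y)=L(y\star x)=M(y)*N(x)=N(x)*M(y)$, so $(L,N,M)$ is also an isotopism $\SF\to\SF'$, and composing it with $(L,M,N)^{-1}$ yields the autotopism $(\mathrm{id},NM^{-1},MN^{-1})$ of $\SF'$. I would then prove the lemma that any autotopism $(\mathrm{id},P,Q)$ of a commutative semifield has $Q=P^{-1}$ with $P$ equal to (left, hence by commutativity also right) multiplication by $d:=P(e')\in N_m(\SF')$: in $x*y=P(x)*Q(y)$, putting $x=e'$ gives $Q=L_{P(e')}^{-1}$, putting $y=P(e')$ gives $P(x)=x*d$, and the remaining content of the identity is precisely $(x*d)*y=x*(d*y)$, i.e.\ $d\in N_m$. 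Applied to $(\mathrm{id},NM^{-1},MN^{-1})$ this produces $d\in N_m(\SF')^{*}$ with $NM^{-1}=L_d$, whence $M=L_{d^{-1}}\circ N$; that is, $M=\alpha\star N$ with $\alpha=d^{-1}$ in the middle nucleus, which is the form appearing in the statement (isotopic semifields have isomorphic nuclei, so it does not matter on which side the middle nucleus is taken).

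It remains to determine how $\alpha$ may be altered and which values are inessential. I would write down two families of autotopisms of $\SF'$: by the lemma, $(\mathrm{id},L_{\beta^{-1}},L_{\beta})$ for each $\beta\in N_m(\SF')^{*}$; and $(R_{\gamma},R_{\gamma},\mathrm{id})$ for each $\gamma\in N(\SF')^{*}$ (an autotopism because $\gamma$ lies in the right, and by commutativity the left, nucleus). A short computation shows that pre-composing $(L,M,N)$ with the first on the left replaces $\alpha$ by $\alpha\star\beta^{-2}$, and with the second replaces $\alpha$ by $\alpha\star\gamma$; both use only $\beta,\gamma\in N_m(\SF')$, $\gamma\in N(\SF')$, and commutativity. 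Thus only the class of $\alpha$ in $N_m(\SF')^{*}/\bigl((N_m(\SF')^{*})^2\cdot N(\SF')^{*}\bigr)$ is an invariant of the isotopy class, and when that class is trivial the isotopism can be adjusted to one with $M=N$, i.e.\ $\SF$ and $\SF'$ are strongly isotopic. Writing $|N_m(\SF')|=p^m$ and $|N(\SF')|=p^k$ (so $k\mid m$), the congruence $(p^m-1)/(p^k-1)=1+p^k+\dots+p^{(m/k-1)k}\equiv m/k\pmod 2$ shows that $N(\SF')^{*}$ sits inside the squares of $N_m(\SF')^{*}$ exactly when $m/k$ is even. Hence: if $m/k$ is odd the above quotient is trivial and $\SF,\SF'$ are strongly isotopic; if $m/k$ is even and they are not strongly isotopic, then for every isotopism the scalar $\alpha$ can be neither a square (else the first modification removes it) nor an element of $N(\SF')$ (else the second does), so $M=\alpha\star N$ with $\alpha$ a non-square of the middle nucleus, hence an element of $N_m(\SF')\setminus N(\SF')$. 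Finally, if $n$ is odd then $m\mid n$ is odd, so $k$ is odd, so $m/k$ is odd, and the first case always applies; thus for commutative presemifields of odd order isotopy coincides with strong isotopy.

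The step I expect to be the main obstacle is the lemma on autotopisms $(\mathrm{id},P,Q)$ of a commutative semifield: it consists of only a few substitutions, but one must extract in the correct order that $Q=P^{-1}$, then that $P=R_d=L_d$, and then recognise the leftover identity as $d\in N_m$ (and not $N_\ell$ or $N_r$), while carefully tracking each use of commutativity; the two explicit autotopisms and the effect of composition on $\alpha$ rely on the same bookkeeping, and everything else (the reduction to semifields, the parity congruence) is routine.
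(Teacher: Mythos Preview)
The paper does not prove this theorem; it is quoted from Coulter and Henderson \cite{CH-2008-cpas} and used as a black box, so there is no in-paper argument to compare against.

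On its own merits your outline follows the standard route and is essentially correct: reduce to the associated semifields, use commutativity to pass from an isotopism $(L,M,N)$ to a second isotopism $(L,N,M)$, extract the autotopism $(\mathrm{id},NM^{-1},MN^{-1})$, identify its middle component via your lemma as multiplication by some $d\in N_m$, and then analyse the class of $\alpha=d^{-1}$ modulo $(N_m^{*})^{2}\cdot N^{*}$ using the congruence $(p^m-1)/(p^k-1)\equiv m/k\pmod 2$. The two families of modifying autotopisms and the deduction for $n$ odd (via $m\mid n$) are fine.

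One detail to tighten: the autotopism you build lives on $\SF'$, so the scalar you obtain is in $N_m(\SF')$ and the relation you actually derive is $M(x)=\alpha * N(x)$ with the $\SF'$-product, whereas the statement is phrased with $\star$ and $N_m(\SF)$. Your remark that ``isotopic semifields have isomorphic nuclei'' is true but does not by itself transport the \emph{formula} from one product to the other. If you compose in the other order to get an autotopism of $\SF$, the lemma yields $M=N\circ L_{c}^{\star}$ with $c\in N_m(\SF)$, a pre-composition rather than the post-composition written in the statement. For the purpose the paper actually makes of the theorem---deciding whether an isotopy class splits by testing the isotopisms $(x,\alpha\star x,x)$---either formulation is equivalent and the square/non-square dichotomy is preserved under the nucleus isomorphism, but you should say so explicitly rather than elide it.
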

One important implication of this theorem is that if the dimension of the middle nucleus over the left nucleus of a commutative semifield is even, then the isotopy class of the semifield can split into two strong isotopy classes. Consequently, we have checked all situations where this might be possible as part of our accounting of the known classes of planar polynomials that we give below. While this may sound computationally difficult, in practice it is not, as the specific format of the splitting scenario outlined in the theorem means we need only check the isotopisms $(L,M,N)=(x,\alpha\star x,x)$ with $\alpha\in N_m(\SF)\setminus N(\SF)$ a non-square.
This observation is not new, having already been used by Zhou
\cite{Z-2012-anoti} for example. Moreover, one can improve the computational investigation by taking only one representative $\alpha$ in each coset $\beta\star N(\SF)$ where $\beta\in N_m(\SF)\setminus N(\SF)$ non-square. Indeed, for any $\gamma\in N(\SF)\setminus\{0\}$ the two isotopism $(L,M,N)=(x,\beta\star x,x)$ and $(L,M,N)=(x,(\gamma\star \beta)\star x,x)$ give strongly isotopic semifields because
\begin{align*}
    \left((\gamma\star \beta)\star x\right)\star y=\left(\gamma\star (\beta\star x)\right)\star y=\gamma\star\left( (\beta\star x)\star y\right). 
\end{align*}

A property that is preserved by an equivalence relation is called an \emph{invariant}. In particular, if a property is preserved by linear equivalence (respectively, affine, EA, CCZ, isotopic), then it is a linear invariant (respectively, affine, EA, CCZ, isotopic). Invariants can be useful to speed up equivalence tests because functions with a different value for an invariant must be inequivalent. Not many invariants are known for planar DO polynomials $F$ over $\F_q$, but some have been useful in past research. Firstly, the orders of the various nuclei of any commutative semifield $\SF$ constructed from $F$ are isotopic invariants. Now define a linear code $\mathcal{C}_F$ with generator matrix
$$
M_F = \left[\begin{matrix}
1\\
x \\
F(x)
\end{matrix}\right]_{x\in \F_{q}}.
$$
The \emph{monomial automorphism group}, or automorphism group for short, is the group of monomial matrices that map the code $\mathcal{C}_F$ to itself. A monomial matrix is an invertible matrix where each column has exactly one non-zero entry. Pott and Zhou \cite{PZ-2010-scopf} showed that the order of the monomial automorphism group of $\mathcal{C}_F$ is a CCZ-invariant.
Finally, we consider the set of \emph{linear self-equivalences} of $F$, $EQ(F,F)$, that is the set of pairs of linear permutations $(L_1, L_2)$ such that $L_1 \circ F \circ L_2 = F$. The \emph{right orbit} of $x\in\F_q$ with respect to the set of linear self-equivalences is defined as the set
$$RO_{F,x} = \left\{y \in \F_q\colon (L_1,L_2)\in EQ(F,F), L_2(x)=y\right\}.$$
The multiset of the cardinalities of each orbit is a linear invariant \cite{IK-2022-darle}.

\section{On the known families of Planar Functions}\label{sec:families}

In this section we address the nomenclature of planar functions.
Our aim is to clarify any confusion there may be in the literature regarding
precedent and discovery while at the same time explaining our naming convention
and hopefully standardizing the names of known families.
We do this in part to address the recent proliferation of claims of new planar
functions, especially over the last decade, which have since been verified as
being examples of known families.
The result of Coulter and Henderson \cite{CH-2008-cpas} linking planar DO
polynomials with
commutative semifields does complicate the history a little, but we are
somewhat fortunate in the sense that almost no planar DO polynomials known at
the time of the discovery corresponded with known families of commutative
semifields. That said, given the equivalence, any families are named after whatever came first, whether that be a semifield or a planar function. 

Finally, we note that in her Ph.D. thesis \cite{K-2009-csooo}, 
Kosick determines planar DO polynomial representatives for all of the
commutative semifield families discovered before 2009. Specifically,
planar DO polynomial representatives are given for the Dickson, Cohen-Ganley,
Ganley families, as well as the sporadic example of Pentilla and Williams.
See also the unpublished paper \cite{CHHKXZ-2007-ppacs} which, among other things, gives a family of planar DO binomials that represent a subset of the Dickson semifields.

\begin{enumerate}[label=\arabic*]
\setcounter{enumi}{1892}
\item {\bf FF} -- Finite fields
    \begin{itemize}
        \item $\SF=(\F_p^n,+,\star )$ given by
        $$x\star y=xy\text{ for all }x,y\in\F_p^n$$
        \item Planar DO representative: 
        $$F(x)=x^2.$$
        \item $(|N(\SF)|,|N_m(\SF)|)=(p^n,p^n)$.
    \end{itemize}
The monomial $x^2$ is planar over any field, finite or infinite, which is not
of characteristic 2.
It is easily seen to be equivalent to the field on which it is defined.
We give \arabic{enumi} as the year as it was then that Moore first established 
the uniqueness of finite fields of arbitrary order, thereby completing their
classification, see \cite{M-1893-adiso, M-1896-adiso}.

\setcounter{enumi}{1905}
\item {\bf D} -- The commutative semifields of Dickson
    \begin{itemize}
        \item $\SF=(\mathbb{F}_{p^m}^{2},+,\star )$ given by
        $$(a, b) \star (c, d) = (a c +\alpha (bd)^{p^i}, ad + bc)\text{ for all }a,b,c,d\in\F_{p^m}.$$
        Here, $\alpha\in\F_{p^m}$ is a nonsquare and $0<i\leq \lfloor \frac{m}{2}\rfloor$. 
        \item Different choices of $i$ lead to non-isotopic classes, while the two integers $i$ and $i'=m-i$ yield isotopic semifields, see Burmester \cite{B-1962-otcna}.
        \item For fixed i, all non-square $\alpha$ produce isotopic semifields, see \cite{D-1906-oclai}.
        \item There are multiple choices for the planar DO representatives for this class. Perhaps the best general forms known prior to this article are those given by Kosick \cite{K-2009-csooo} (see Lemma \ref{Dicksonreps} below). Also, a notable one is in binomial form \cite{CHHKXZ-2007-ppacs} but has many restrictions (see Lemma \ref{lem:BinomDicksonreps} below). In this article, we shall establish a class of quadrinomials that can be used as the representatives, see Theorem \ref{quadreps}.
        \item $(|N(\SF)|,|N_m(\SF)|)=(p^{\gcd(i,m)},p^{m})$.
    \end{itemize}
Dickson produced his family of commutative semifields in 1906 in 
\cite{D-1906-oclai}.
(Note that a typo in Dembowski's book \cite{bD-1968-fg}, page 241, inadvertently cites his nearfield paper \cite{D-1905-ofa} instead.)
He was motivated by his work \cite{D-1905-ofa} from the previous year in which he was the first to construct nearfields (a division ring where only one of the two distributive laws holds), describing an infinite family, along with a further 7 sporadic examples. That he had described all possible nearfields was only confirmed in 1935 by Zassenhaus \cite{Z-1935-ubef}.
It is, perhaps, worth mentioning that in 1907 Veblen and Wedderburn \cite{VW-1907-ndanp} used Dickson's nearfields to construct the first known examples of non-Desarguesian projective planes, and that their construction could
also have used Dickson's commutative semifields.

\setcounter{enumi}{1951}
\item {\bf A} -- Albert's twisted fields
    \begin{itemize}
        \item $\SF=(\F_p^n,+,\star )$ given by
        $$x\star y=x^{p^i}y+xy^{p^i}\text{ for all }x,y\in\F_p^n$$
        \item Planar DO representative: 
        $$F(x) = x^{p^{i}+1}$$
        where $1\leq i\leq \lfloor \frac{n}{2}\rfloor$ with $\frac{n}{\gcd(n,i)}$ odd.
        \item Different choices of $i$ lead to non-isotopic classes, see \cite{A-1952-onda}.
        \item $(|N(\SF)|,|N_m(\SF)|)=(p^{\gcd(n,i)},p^{\gcd(n,i)})$.
    \end{itemize}

Albert's twisted fields were first published in \cite{A-1952-onda}, and
subsequently generalised and extensively studied by him in the papers
\cite{A-1960-fdaaf, A-1961-gtf, A-1961-ifgtf, A-1963-otcga}.
The form of the equivalent planar monomials, the DO monomials, were first
described by Dembowski and Ostrom \cite{DO-1968-poowc}, but with an erroneous
condition, subsequently corrected by Coulter and Matthews in
\cite{CM-1997-pfapo}. Note that both Albert's original construction and the
DO monomial examples include the finite field case as an example. 

\setcounter{enumi}{1981}
\item {\bf CG} -- The commutative semifields of Cohen and Ganley
    \begin{itemize}
        \item $\SF=(\mathbb{F}_{3^m}^{2},+,\star )$ given by
        $$(a, b) \star (c, d) = (a c +\alpha (bd)+\alpha^3 (bd)^9, ad + bc+\alpha(bd)^3) \text{ for all } a,b,c,d\in\F_{3^m}.$$
        Here, $m\geq 3$, and $\alpha \in \mathbb{F}_{3^m}$ is a nonsquare.
        \item All non-square $\alpha$ produce isotopic semifields, see \cite{CG-1982-cstdo}.
        
        \item Planar DO representative over $\mathbb{F}_{3^{2m}}$: 
            $$F(x) = L(t^{2}(x)) + \frac{1}{2}x^2.$$
        Here $m \geq 3$ odd, $t = x^{3^m}-x$, $\beta \in \F_{3^{2m}}\setminus\F_{3^m}$, $\alpha=t(\beta)$, and $L(x)=-x^9-\alpha x^3 + (1-\alpha^4)x$.
        
        \item Each member of this family is isotopic inequivalent to the Dickson (D) family.
        \item There are multiple choices for the planar DO representatives for this class. The planar DO representatives we use here were given by Kosick \cite{K-2009-csooo}. We remark that there is a typo in the DO representatives given by Kosick, but the proofs are correct for the forms we give.
        \item $(|N(\SF)|,|N_m(\SF)|)=(3,3^m)$.
    \end{itemize}

    They were first constructed by Cohen and Ganley \cite{CG-1982-cstdo}, in
    their study of a general construction method for commutative semifields two
    dimensional over their middle nuclei. 

\setcounter{enumi}{1981}
\item {\bf G} -- The commutative semifields of Ganley
    \begin{itemize}
        \item $\SF=(\mathbb{F}_{3^m}^{2},+,\star )$ given by
            $$(a, b) \star (c, d) = (ac - b^{9}d - bd^{9}, ad + bc + b^{3}d^{3})\text{ for all }a,b,c,d\in\F_{3^m}.$$
        Here, $m\geq 3$ odd.
        
        \item Planar DO representative over $\mathbb{F}_{3^{2m}}$: 
            $$F(x) = L(t^{2}(x)) + D(t(x)) + \frac{1}{2}x^2.$$
        Here $m \geq 3$ odd, $t = x^{3^m}-x$, $\beta \in \F_{3^{2m}}\setminus\F_{3^m}$, $\alpha=t(\beta)$, $L(x)=-\alpha^{-5}x^3+x$, and $D(x)=-\alpha^{-10}x^{10}$.
        
        \item There are multiple choices for the planar DO representatives for this class. The planar DO representatives we use here were given by Kosick \cite{K-2009-csooo}. As for the CG family, we remark that there is a typo in the DO representatives given by Kosick, but the proofs are correct for the forms we give.
        \item $(|N(\SF)|,|N_m(\SF)|)=(3,3)$.
    \end{itemize}
    
    Building on the work in \cite{CG-1982-cstdo}, Ganley used the concept of weak
    nuclei to produce the commutative semifields in \cite{G-1981-cwns}.

\setcounter{enumi}{1996}
\item {\bf CM} -- The planar monomials of Coulter and Matthews
    \begin{itemize}
        \item Planar DO representative over $\mathbb{F}_{3^n}$:
            $$F(x)=x^{(3^i + 1)/2}.$$
        Here, $i\geq 3$ is odd and $\gcd(n,i) = 1$.
    \end{itemize}

    These are the only non-DO examples of planar functions known, and as they are not DO polynomials, they do not produce commutative semifields. The Coulter-Matthews monomials were
    announced on an online forum in 1994 and presented in full at Fq3, the 3rd
    International Conference on Finite Fields and Applications, in
    Glasgow in July, 1995. Delays in publication meant that Coulter and Matthews'
    results only appeared in print in 1997 in \cite{CM-1997-pfapo}.
    In the same year, Helleseth and Sandberg \cite{HS-1997-spmwl} published a
    paper which included the Coulter-Matthews examples.
    However, precedent was confirmed publicly by Tor Helleseth at BFA 2018, the
    third International Workshop on Boolean Functions and Their Application, in
    Loen in June 2018. We are most grateful to Tor for the clarification. These are the only known non-quadratic planar. It is known that if a quadratic function is CCZ-equivalent to a monomial, then such monomial must be quadratic. To check CCZ-equivalence among monomials, it is enough to check that their exponents belong to the same cyclotomic set. Since EA coincides with CCZ for planar functions, we can classify the instances of CM separately from the quadratic planar functions.
    
\setcounter{enumi}{1996}
    \item {\bf TST} -- The Ten-Six-Two family of planar DO polynomials
        \begin{itemize}
            \item Planar DO representative over $\mathbb{F}_{3^n}$: 
                $$F(x) = x^{10} \pm x^6 - x^2 .$$
            Here, $n\geq 5$ odd.
            \item The two functions are inequivalent up to strong isotopism, see \cite{CH-2008-cpas}.
            \item $(|N(\SF)|,|N_m(\SF)|)=(3,3)$.
        \end{itemize}

    The Ten-Six-Two family of functions are so named for their form:
    $g_5(x^2,a)=x^{10}+ax^6-a^2x^2$, where $g_k(x,a)$ denotes the $k$th Dickson
    polynomial of the first kind. (For information regarding the Dickson polynomials
    of the first and second kind, see the monograph \cite{bLMT-1993-dp} of
    Lidl, Mullen and Turnwald.)
    Coulter and Matthews established the planarity of $g_5(x^2,1)$ in
    \cite{CM-1997-pfapo}. Ding and Yuan \cite{DY-2006-afosh} extended the proof to
    all $a\ne 0$ in 2007 as part of their disproof of a 70 year old
    conjecture on skew Hadamard difference sets.
    Coulter and Henderson \cite{CH-2008-cpas} settled the question of equivalence
    by showing there are two equivalence families, one for all square $a$ and one
    for all non-square $a$. Thus, one can use $a=\pm 1$ to describe the two families.

\setcounter{enumi}{1999}
    \item {\bf PW} -- A sporadic commutative semifield of order $3^{10}$
        \begin{itemize}
            \item $\SF=(\mathbb{F}_{3^5}^{2},+,\star )$ given by
                $$(a, b) \star (c, d) = ((ac + bd)^{9}, ad + bc + (bd)^{27})\text{ for all } a,b,c,d\in\F_{3^5}.$$
            \item Planar DO representatives over 
            \item It is not isotopic equivalent to a Dickson semifield or to a Cohen-Ganley semifield,  see \cite{PW-2000-oops}.
            \item $(|N(\SF)|,|N_m(\SF)|)=(3,3^5)$.
        \end{itemize}
    
    In their study of ovoids in the orthogonal space $O(5,3^5)$, Penttila and
    Williams \cite{PW-2000-oops} discovered this commutative semifield as part
    of a computational search. At the time of writing, this example
    is not part of any known infinite family.

\setcounter{enumi}{2007}
    \item {\bf ACW} -- A sporadic planar DO binomial over $\F_{3^5}$
        \begin{itemize}
            \item Planar DO representative over $\mathbb{F}_{3^5}$: 
                $$F(x)=x^{90} + x^2.$$
            \item $(|N(\SF)|,|N_m(\SF)|)=(3,3).$
        \end{itemize}

    At and Cohen \cite{AC-2008-antfa} found a new planar binomial over
    $\F_{3^5}$ in the course
    of testing a proposed method for establishing planarity.
    In July, 2009, in Dublin, at Fq9, the 9th International Conference on Finite
    Fields and Applications, Coulter and Kosick \cite{CK-2010-csooa}
    attributed the very same planar example to Guibiao Weng, who had informed them
    of the example in personal correspondence dating from March 2008 at the
    latest.
    It appears both At and Cohen, and Weng discovered this
    binomial independently at around the same time.

\setcounter{enumi}{2007}
    \item {\bf BH} -- The planar DO polynomials of Budaghyan and Helleseth
        \begin{itemize}
            \item Planar DO representative over $\F_{p^{2m}}$:
            $$F(x)=x^{p^m +1} +\omega \Tr_m^{2m}(\beta x^{p^s +1}).$$ 
            Here, $\omega \in \mathbb{F}_{p^{2m}}\setminus\F_{p^m}$, $\beta\in\F_{p^{2m}}$ is a non-square and $0<s<m$ with $\nu_2(s)\neq \nu_2(m)$ where, for any positive integer $k$, $\nu_2(k)$ is the non-negative integer such that $\frac{k}{2^{\nu_2(k)}}$ is odd.
            \item For fixed $s$, any choice of $\omega$ and $\beta$ produces functions in the same strong isotopy class, see Bierbrauer \cite{B-2011-csfpm} or Feng and Li \cite{FL-2018-otico}.
            \item For any $m>1$ odd, each member splits into two isotopic classes (see Remark \ref{rem:BH} below).
            \item $(|N(\SF)|,|N_m(\SF)|)=(p^{\gcd(m,s)},p^{2\gcd(m,s)})$ \cite{MP-2012-otnoa}.
        \end{itemize}

    Budaghyan and Helleseth \cite{BH-2008-npnmo} established a family of planar DO polynomials with many terms. These produced the first new infinite family of
    commutative semifields without a restriction on the characteristic to be discovered in over 50 years. For convenience, here we present the form defined by Bierbrauer in \cite{B-2011-csfpm,B-2010-nspaa} (see \cite[Theorem 7]{ZP-2013-anfos}) because it is shorter.
    Many families of planar DO polynomials
    discovered since have been shown to be equivalent to this family such as the Lunardon-Marino-Polverino-Trombetti-Bierbrauer (LMPTB) family \cite{B-2011-csfpm}, the Zha-Wang (ZW) family \cite{ZW-2009-nfopn}, and a family by Bierbrauer\cite{BBFMP-2018-afosi}. Even some parts of the ZP family, see below, turns out to be equivalent to this family. Over $\F_{3^4}$, all the instances of the BH family do not split. In this paper, we show that all the instances of the BH family over $\F_{3^8}$ that have nuclei $(|N(\SF)|,|N_m(\SF)|)=(3,3^2)$ split and the ones that have nuclei $(|N(\SF)|,|N_m(\SF)|)=(3^2,3^4)$ do not split.

\setcounter{enumi}{2008}
    \item {\bf ZKW} -- The planar DO binomials of Zha, Kyureghyan and Wang
        \begin{itemize}
            \item Planar DO representative over $\F_{p^{n}}$:
            $$F(x)=x^{p^{s} + 1} - \alpha^{p^{k} - 1}x^{p^k + p^{2k + s}}.$$ 
            Here $n=3k$, $\gcd(3, k) = 1$, $s$ positive integer such that $k \equiv s \pmod3$, $\frac{n}{\gcd(s, n)}$ is odd and $\alpha \in \mathbb{F}_{p^{3k}}$ is primitive.
            \item $(|N(\SF)|,|N_m(\SF)|)=(p^{\gcd(s,k)},p^{\gcd(s,k)})$ \cite{MP-2012-otnoa}.
        \end{itemize}

    A family of planar DO binomials was given in 2009 by
    Zha, Kyureghyan and Wang \cite{ZKW-2009-pnlba}. This family was obtained as a generalization of a family of APN binomials presented by Budaghyan, Carlet, and Leander \cite{BCL-2008-tcoqa}. They also showed that the
    family contains planar functions that are not monomials.

\setcounter{enumi}{2009}
    \item {\bf B} -- The planar DO binomials of Bierbrauer
        \begin{itemize}
            \item Planar DO representative over $\F_{p^{n}}$:
                $$F(x)=x^{p^{s} + 1} - \alpha^{p^k-1} x^{p^{3k} + p^{k + s}}.$$
            Here $n=4k$, such that $\frac{2k}{\gcd(2k,s)}$ is odd, $p^s=p^k=1\pmod{4}$, and $\alpha \in \mathbb{F}_{p^{3k}}$ is primitive.
            \item $(|N(\SF)|,|N_m(\SF)|)=(p^{\gcd(s,k)},p^{2\gcd(s,k)})$ \cite{MP-2012-otnoa}.
        \end{itemize}

    This family of planar DO binomials was established by Bierbrauer
    \cite{B-2010-nspaa}. These binomials have a remarkably similar structure to those discovered in \cite{ZKW-2009-pnlba}, but have distinct dimensions. They were also obtained as a generalization of the same family of APN binomials \cite{BCL-2008-tcoqa}.

\setcounter{enumi}{2009}
    \item {\bf CK} -- Two sporadic planar DO polynomials over fields of order $3^5$ and $5^5$
        \begin{itemize}
            \item Planar DO representatives over $\F_{p^5}$ with $p=3,5$:
                $$F(x) = L(t^{2}(x)) + D(t(x)) + \frac{1}{2}x^2.$$
            \item The first with $p=3$, $t(x) = x^3 - x$, $L(x) = -x^3$ and $D(x)=-x^{36}+x^{30}+x^{28}+x^4$ (or $L(x) = -x$ and $D(x) =-x^{36}+x^{28}+x^{12}+x^4$).
            \item $(|N(\SF)|,|N_m(\SF)|)=(3,3)$.
            \item The second with $p=5$, $t(x) = x^5 - x$, $L(x) = x^{5^3} + x^{5^2} + 2x^5 + 3x$ and $D(x)=0$ (or $L(x) = 2x^{5^2} + x^5$ and $D(x) = 2x^{5^3 + 5} + 2x^{5^2 + 1}$).
            \item $(|N(\SF)|,|N_m(\SF)|)=(5,5)$.
        \end{itemize}

    Using the form described in \cite{CHK-2007-ppfcs}, Coulter and Kosick 
    \cite{CK-2010-csooa} conducted an exhaustive search for families of planar DO
    polynomials that had representatives with coefficients in the prime subfield.
    This search was carried out for fields of order $3^5$ and $5^5$, and two new
    examples were discovered, one for each order. 
    Both remain outside of any known infinite family.

\setcounter{enumi}{2012}
    \item {\bf ZP} -- The commutative semifields of Zhou and Pott
        \begin{itemize}
            \item $\SF = (\mathbb{F}_{p^m}^{2},+,\star)$ given by
                $$(a, b) \star (c, d) = (a\circ_k c +\alpha(b\circ_k d)^{p^i}, ad + bc) \text{ for all } a,b,c,d\in\F_{p^m}^{2}.$$
            Here $0\leq k,i\leq \lfloor \frac{m}{2}\rfloor $ with $(i,k)\neq (0,0)$, $\frac{m}{\gcd(m,k)}$ odd, $x\circ_k y=x^{p^k}y+xy^{p^k}$ and $\alpha \in \mathbb{F}_{p^m}$ is a nonsquare.
            
            \item By fixing $i$ and $k$ as above, any choice of $\alpha\in\F_{p^m}$ non-square leads to a semifield that fall into the same strong isotopic class \cite{ZP-2013-anfos}.
            \item The D family coincides with ZP with parameters $k=0$ and $0< i\leq \lfloor \frac{m}{2}\rfloor $. The BH family over the finite field $\F_{p^m}$ such that $m$ is odd and $-1$ is a square in $\F_{p^m}$ coincides with ZP with parameters $i=0$ and $0< k\leq \lfloor \frac{m}{2}\rfloor $ \cite[Theorem 5]{ZP-2013-anfos}.
            \item  Let $0\leq i_2,i_2\leq \lfloor \frac{m}{2}\rfloor $ and $0< k_1,k_2\leq \lfloor \frac{m}{2}\rfloor $ with $(i_1,k_1)\neq (i_2,k_2)$ then the two semifileds defined by $(i_1,k_1)$ and $(i_2,k_2)$ are not isotopic. Moreover if $i=0$ and $0< k\leq \lfloor \frac{m}{2}\rfloor $, then the isotopic class of each semifiled defined by $(0,k)$ contains exactly two strong isotopic classes \cite[Theorem 6]{ZP-2013-anfos}.
            \item $(|N(\SF)|,|N_m(\SF)|)=\begin{cases}
                (p^{\gcd(m,k)},p^{2\gcd(m,k)})&i=0\\
                (p^{\gcd(m,k,i)},p^{\gcd(m,k)})&i>0.
            \end{cases}$ 
        \end{itemize}

    By cleverly replacing the field multiplication with twisted field
    multiplication in the general form of commutative semifields studied by Cohen
    and Ganley \cite{CG-1982-cstdo},
    itself a form based on Dickson's original construction \cite{D-1906-oclai},
    Zhou and Pott \cite{ZP-2013-anfos} produced a new general family of
    commutative semifields. This construction method generated many new
    inequivalent families.

\setcounter{enumi}{2022}
    \item {\bf GK} -- The commutative semifields of G\"olo\u glu and K\"olsch
        \begin{itemize}
            \item Planar DO representative over $\F_{p^m}^2$:
                $$F(x,y) = (x^{p^k+1}+\alpha y^{p^{k}+1}, x^{p^{k+m/2}}y+\beta \alpha^{-1}xy^{p^{k+m/2}}).$$
            Here $m$ is even and not a power of $2$, $\alpha\in\F_{p^m}$ is a nonsquare, $\beta\in\F_{p^m}$ is not a power of $p^{m/2}+1$ and $\frac{m}{\gcd(m,k)}$ is odd.
            \item $(|N(\SF)|,|N_m(\SF)|)=(p^{\gcd(k,m)/2},p^{\gcd(k,m)})$.
        \end{itemize}
    
    G\"olo\u glu and K\"olsch dramatically changed the number of known
    inequivalent commutative semifields in a landmark paper
    \cite{GK-2023-aebot}.
    They produced a new construction method which allowed them to 
    combine some of the previously known commutative semifields into new semifields.
    Through their construction method they were able to show that the number of
    non-isotopic commutative semifields of order $p^n$ grows exponentially 
    with $n$. This confirmed a conjecture of Kantor, who had proved a
    corresponding result for commutative semifields of even order in 2003
    \cite{K-2003-csass}, but the result did not transfer to odd characteristic.
\end{enumerate}

\begin{remark}
In 2007, Coulter, Henderson and Kosick \cite{CHK-2007-ppfcs} developed a general form for planar DO polynomials that can be used to describe all planar DO families.
Using this form they published a purported example over $\F_{3^8}$ with middle nucleus of order 9 and nucleus of order 3. Unfortunately, the paper has numerous typographical errors and the original example, often called the CHK semifield, was wrong (it is not even planar). A replacement example was given on Coulter's personal website, but it was sourced from a file of Dickson isotopes by mistake. At the time \cite{CHK-2007-ppfcs} was published, Coulter, Henderson and Kosick knew of 4 potentially inequivalent commutative semifields of order $3^8$ with $(|N(\SF )|, |N_m(\SF)|) = (3,9)$, but had been unable to prove the inequivalence of these among themselves. We have now confirmed that the four potential classes are all now covered by the BH class (indeed, Classes 8.5 and 8.6 in Table \ref{tab:InequivalentRepresentatives}). For these reasons, even though the 4 examples known to the authors of \cite{CHK-2007-ppfcs} do pre-date the BH class, we do not list the CHK example as part of the nomenclature.
\end{remark}

\begin{remark}\label{rem:BH}
Let $m>1$. In \cite{ZP-2013-anfos}, it is shown that ZP with $i=0$ and $k\neq 0$ coincides with BH if $m$ is odd and $-1$ is a square in $\F_{p^m}$ and that each of those instances split. Since $m$ is odd, $-1$ is a square in $\F_{p^m}$ if and only if $p=1\pmod{4}$. Marino and Polverino \cite{MP-2012-oiasi} prove that the BH family splits if $m$ is not a power of $2$, $\frac{m}{\gcd(m,s)}$ is odd and $p^{\gcd(m,s)}=3\pmod{4}$. Observe that if m is odd, then $p^{\gcd(m,s)}=p\pmod{4}$. If $m$ is even and $\frac{m}{\gcd(m,s)}$ is odd, then $\gcd(m,s)$ is even and $p^{\gcd(m,s)}=1\pmod{4}$. Combining the two results, we get that the BH family splits if $m$ is odd.
\end{remark}

\subsection{Some remarks on the classification of planar DO polynomials}
We record some known facts on the classification of planar functions, planar DO polynomials and commutative semifields.
\begin{itemize}
\item Planar functions over prime fields were classified in 1989 and 1990. All
are quadratic, and all are equivalent to $x^2$, which produces the
finite field.
This was established independently by three sets of authors:
Gluck \cite{G-1990-apapp}, Hiramine \cite{H-1989-acoap}, and 
R\' onyai and Sz\" onyi \cite{RS-1989-pfaff}.

\item Planar monomials have been classified over fields of order $p^n$ with
$n\in\{1,2,3,4\}$. Johnson \cite{J-1987-ppoot} proved the prime field case in
1987.
Order $p^2$ was completed by Coulter \cite{C-2006-tcopm}, and order $p^4$ by
Coulter and Lazebnik \cite{CL-2012-otcop}.
The order $p^3$ case was completed in 2022 by Bergman, Coulter
and Villa \cite{BCV-2022-cpmof}.

\item Knuth \cite{K-1965-fsapp} showed that commutative semifields of order
$p^2$ are necessarily isotopic to finite fields in 1965. In 1977,
Menichetti \cite{M-1977-oakcc} proved that any commutative semifield of 
dimension 3 over its nucleus is necessarily isotopic to an Albert twisted
field. Together these two results complete the classification of commutative
semifields of orders $p^2$ and $p^3$. 

\item Menichetti later proved in \cite{M-1996-daoaf} that if $n$ is prime and $q$ is sufficiently large, any commutative semifield of order $q^n$ with nucleus of order $q$ is equivalent to an Albert's twisted field.

\item Let $q$ be an odd prime power and $l$ a positive integer.
Blokhuis, Lavrauw and Ball \cite{BLB-2003-otcos} proved that if
$q\ge 4l^2 - 8l +2$, then
any commutative semifield of order $q^{2l}$ with 
$|N(\SF)|\ge q$, $|N_m(\SF)| \ge q^l$ is either equivalent to
the finite field or a Dickson semifield.
In particular, this shows that any commutative semifield of order $p^4$ with
$(|N(\SF)|, |N_m(\SF)|) = (p,p^2)$ is necessarily a Dickson semifield
or a finite field.
Note that the only remaining case to consider for order $p^4$ is with
$(|N(\SF)|, |N_m(\SF)|) = (p,p)$. At the time of writing, we know of
no examples.

\item Commutative semifields of orders $3^n$ have been classified for 
$n\le 5$. The results of Knuth and Menichetti mentioned above deal with
$n\le 3$. For order 81, Dickson [35] showed in 1906 ``by a tedious computation"
that the only commutative semifields were those given by his construction. To put this in context, it wasn't until 2008 that
Dempwolff \cite{D-2008-spoo} managed to enumerate all semifields of order 81.
Weng and Zeng \cite{WZ-2012-fropd} computed all commutative semifields
of order 243 in 2012.
\item For $m\in\{2,3,4,5\}$, every commutative semifield $\SF$ over $\F_{3^{2m}}$ with nuclei $(|N(\SF)|,|N_m(\SF)|)=(3,3^m)$ is either in Dickson, in Cohen-Ganley, or in Pentilla-Williams (if $m=5$) \cite{BLB-2003-otcos,MPT-2007-oflso,LR-2019-codrt}.
\item For $n$ up to $7$, all planar DO polynomials in $\F_{3^n}$ with coefficients in the prime field $\F_3$ were classified by Davidova and Kaleyski \cite{DK-2022-coadp}.
\end{itemize}


\section{Quadrinomials representing the Dickson family}\label{sec:DicksonQuadrinomials}
There are a number of planar DO representatives of the Dickson family known.
Some of these are particularly simple. 
For example, \cite[Theorem 4.2]{CHHKXZ-2007-ppacs} gives binomial
representatives for the specific Dickson semifields of dimension 2 over the middle nucleus and dimension 4 over the nucleus.
Kosick \cite{K-2009-csooo} gives also planar DO representatives for
any Dickson semifield. Here we present both for completeness.
\begin{lemma}[{\cite[Theorem 4.2]{CHHKXZ-2007-ppacs}}]\label{lem:BinomDicksonreps}
    Let $p$ be any odd prime and let $m\ge 2$ be an even positive integer such that $p^{m/2}=1\pmod 4$. Let $q=p^m$, let $\alpha$ be a primitive element of $\F_{q^2}$ and $e$ a positive integer. Then $F(x)=x^{q+1}+\alpha^{e(p^{m/2}-1)}x^{2p^{m/2}}$ is a planar DO polynomial over
$\F_{q^2}$ representing the Dickson semifield as described in class {\bf D}.
\end{lemma}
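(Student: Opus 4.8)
First I would set $r=p^{m/2}$, so that $\F_{q^2}=\F_{r^4}$ and $q=r^2$. That $F$ is a DO polynomial is immediate, since its exponents $q+1=p^{m}+p^{0}$ and $2p^{m/2}=p^{m/2}+p^{m/2}$ both have $p$-weight $2$. For the two substantive claims I would work with the presemifield multiplication $x\star_F y=F(x+y)-F(x)-F(y)$; expanding $(x+y)^{q+1}=(x^{q}+y^{q})(x+y)$ and $(x+y)^{2r}=(x^{r}+y^{r})^{2}$ gives
$$x\star_F y=x^{q}y+xy^{q}+2c_0\,(xy)^{r},\qquad c_0:=\alpha^{e(r-1)},$$
and the feature of the coefficient that matters is that $c_0$ is a nonzero $(r-1)$st power in $\F_{r^4}^{*}$, equivalently $N_{\F_{r^4}/\F_r}(c_0)=1$.

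For planarity, I would use that a DO polynomial $F$ is planar exactly when, for each $a\in\F_{r^4}^{*}$, the equation $x\star_F a=0$ forces $x=0$. Substituting $x=az$, dividing by $a^{q+1}$, and using $2r-r^{2}-1=-(r-1)^{2}$, this becomes $z^{r^{2}}+\mu z^{r}+z=0$ with $\mu=\mu_a:=2c_0\,a^{-(r-1)^{2}}$; dividing by $z$ and putting $w=z^{r-1}$ it becomes the trinomial equation $w^{r+1}+\mu w+1=0$ subject to $N_{\F_{r^4}/\F_r}(w)=1$ (by the cyclic case of Hilbert's Theorem~90 the $(r-1)$st powers are precisely the elements of norm one over $\F_r$). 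Thus planarity is equivalent to: $w^{r+1}+\mu w+1$ has no root of norm one over $\F_r$, for every $\mu$ in the coset $\{\mu_a:a\in\F_{r^4}^{*}\}=2c_0\cdot(\F_{r^4}^{*})^{(r-1)^{2}}$. I would prove this by computing $N_{\F_{r^4}/\F_r}(w)=w^{(r+1)(r^{2}+1)}$ for a hypothetical root, reducing it modulo $w^{r+1}=-\mu w-1$ to a rational expression of low degree in $w$ and $\mu$, solving for the unique candidate value of $w$, and substituting back to obtain a polynomial identity in $\mu$ that is incompatible with $\mu$ lying in that coset. Both hypotheses enter exactly here: that $c_0\in(\F_{r^4}^{*})^{r-1}$, and that $p^{m/2}\equiv1\pmod 4$ — the latter forcing $\gcd((r-1)^{2},r^{4}-1)=4(r-1)$, which keeps the coset of admissible $\mu$ small enough to avoid any value at which the trinomial vanishes at a norm-one element.

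Once $F$ is known to be planar, $(\F_{r^4},+,\star_F)$ is a commutative presemifield; by the Coulter--Henderson correspondence \cite{CH-2008-cpas} it determines a commutative semifield $\SF$, and a computation of the nuclei of $\SF$ (of the kind carried out for Dickson-type semifields) gives $(|N(\SF)|,|N_m(\SF)|)=(p^{m/2},p^{m})$. In particular $\SF$ is two-dimensional over its middle nucleus and four-dimensional over its nucleus, and is not a finite field. Since $r=p^{m/2}\ge p\ge 3>2=4\cdot 2^{2}-8\cdot 2+2$, the classification of Blokhuis, Lavrauw and Ball \cite{BLB-2003-otcos} — applied with $q=r$ and $l=2$ to the semifield $\SF$ of order $r^{4}$, whose nuclei satisfy $|N(\SF)|\ge r$ and $|N_m(\SF)|\ge r^{2}$ — forces $\SF$ to be a finite field or a Dickson semifield, and the former has been excluded. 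Among Dickson semifields of order $p^{2m}$ the nucleus has order $p^{\gcd(i,m)}$, so $\gcd(i,m)=m/2$ together with $0<i\le m/2$ forces $i=m/2$; hence $F$ represents the Dickson semifield of class $\mathbf D$ with this $i$, namely the one that is two-dimensional over the middle nucleus and four-dimensional over the nucleus. Alternatively, and without invoking the classification, I could realise $\F_{r^4}=\F_{r^2}[\theta]$ with $\theta^{2}$ a nonsquare in $\F_{r^2}$, expand $x\star_F y$ in these coordinates, and exhibit explicit $\F_r$-linear permutations $L,M,N$ with $L(x\star_F y)=M(x)*N(y)$ for $*$ the Dickson multiplication with $i=m/2$; producing such maps would at once reprove planarity and place $F$ in class $\mathbf D$.

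The crux is the trinomial step in the second paragraph: showing that $w^{r+1}+\mu w+1$ has no norm-one root uniformly over the relevant coset of $\mu$ is a delicate elementary computation, and it is precisely there that $p^{m/2}\equiv1\pmod 4$ is essential — dropping it roughly doubles the coset and a bad $\mu$ appears, so the binomial is no longer planar. By contrast, everything after planarity is bookkeeping: the nucleus orders together with the Blokhuis--Lavrauw--Ball theorem (or, on the alternative route, a finite search for the isotopism) pin down the Dickson parameter $i=m/2$.
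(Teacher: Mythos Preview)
The paper does not prove this lemma at all: it is quoted verbatim from \cite[Theorem~4.2]{CHHKXZ-2007-ppacs} and stated ``for completeness'' alongside Kosick's hexanomial representatives, so there is no in-paper argument to compare your proposal against. What follows is therefore an assessment of your outline on its own merits.

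Your reduction is clean and correct. Writing $r=p^{m/2}$, the presemifield product is indeed $x\star_F y=x^{q}y+xy^{q}+2c_0(xy)^{r}$ with $c_0=\alpha^{e(r-1)}$, and the substitution $x=az$ followed by $w=z^{r-1}$ correctly turns the planarity question into: for every $\mu$ in the coset $2c_0\cdot(\F_{r^4}^{*})^{(r-1)^2}$, the trinomial $w^{r+1}+\mu w+1$ has no root $w$ with $N_{\F_{r^4}/\F_r}(w)=1$. Your computation $\gcd\bigl((r-1)^{2},r^{4}-1\bigr)=4(r-1)$ under $r\equiv 1\pmod 4$ is also right, and your heuristic about the coset size doubling when the congruence fails points in the right direction.

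The gap is that you never actually carry out the ``delicate elementary computation'' you flag as the crux. You describe a strategy---reduce $w^{(r+1)(r^{2}+1)}$ modulo $w^{r+1}=-\mu w-1$, solve for the unique candidate $w$, and substitute back---but you do not execute it, and it is not obvious that it closes cleanly. Expanding $(-\mu w-1)^{r^{2}+1}=1$ and then eliminating the remaining high powers of $w$ via the relation is several nontrivial steps, and one must track exactly where both hypotheses (that $c_0$ is an $(r-1)$st power and that $4\mid r-1$) are used to rule out every would-be solution. As written, this paragraph is a plan, not a proof.

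The same applies to the second half. You assert $(|N(\SF)|,|N_m(\SF)|)=(p^{m/2},p^{m})$ and then invoke \cite{BLB-2003-otcos}, which is a legitimate route, but the nuclei computation is itself the content here and you have not done it. (Your alternative---writing $\F_{r^{4}}=\F_{r^{2}}[\theta]$ and exhibiting an explicit isotopism to the Dickson multiplication with $i=m/2$---is in fact the more direct path and would simultaneously establish planarity; if you pursue that, the calculation is finite but still needs to be written out.) In short: the architecture is sound, but both load-bearing computations are promised rather than performed.
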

\begin{lemma}[{Kosick \cite[Theorem 3.1.2]{K-2009-csooo}}]\label{Dicksonreps}
Let $p$ be any odd prime, $q=p^m$ for some integer $m\ge 2$ and let
$0< i <m$. Let $t(x)=x^q-x$ and $L(x)=8^{-1} (x^{p^i} - x)$ be functions over $\F_{q^2}$.
Then $F(x) = L(t^2(x)) + 2^{-1} x^2$ is a planar DO polynomial over
$\F_{q^2}$ representing
the Dickson semifield as described in class {\bf D}.
\end{lemma}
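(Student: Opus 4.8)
The statement to prove is Lemma \ref{Dicksonreps} (Kosick's representatives): for $p$ an odd prime, $q=p^m$ with $m\ge2$, $0<i<m$, and taking $t(x)=x^q-x$, $L(x)=8^{-1}(x^{p^i}-x)$ over $\F_{q^2}$, the polynomial $F(x)=L(t^2(x))+2^{-1}x^2$ is a planar DO polynomial over $\F_{q^2}$ representing the Dickson semifield of class \textbf{D}. The plan is to verify two things: first that $F$ is a DO polynomial and is planar; and second that the commutative presemifield it produces is isotopic to the Dickson construction with the appropriate parameters. The natural route is \emph{not} to grind out a differential-uniformity count directly, but rather to exhibit the presemifield multiplication $x\star_F y=F(x+y)-F(x)-F(y)$ explicitly in coordinates over $\F_q$ and recognize it as a Dickson product; planarity then follows for free since the Dickson construction is known to give a (pre)semifield.

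\medskip
\noindent\textbf{Step 1: coordinatize.} Identify $\F_{q^2}$ with $\F_q^2$ via a basis, say write $x=a+b\theta$ with $\theta\in\F_{q^2}\setminus\F_q$ chosen so that $\theta^q=-\theta$ (possible since $q$ is odd, e.g. $\theta^2=\alpha$ a nonsquare in $\F_q$). Then $t(x)=x^q-x$ acts as $t(a+b\theta)=-2b\theta$, so $t^2(x)=4b^2\theta^2=4\alpha b^2\in\F_q$. Since $t^2(x)\in\F_q$, applying $L(x)=8^{-1}(x^{p^i}-x)$ gives $L(t^2(x))=8^{-1}\big((4\alpha b^2)^{p^i}-4\alpha b^2\big)=8^{-1}\big(4^{p^i}\alpha^{p^i}(b^2)^{p^i}-4\alpha b^2\big)$. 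Meanwhile $2^{-1}x^2=2^{-1}(a^2+\alpha b^2)+ab\,\theta$. Collecting, $F(a+b\theta)$ has the form $(\text{quadratic in }a,b\text{ involving }b^{2p^i})+ab\,\theta$; in particular all terms have algebraic degree $2$ in $a,b$ (note $(b^2)^{p^i}=b^{2p^i}$ has $p$-weight $2$ since $2<p$), so $F$ is a DO polynomial over $\F_{q^2}$. That part is routine bookkeeping.

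\medskip
\noindent\textbf{Step 2: compute the presemifield product and match with Dickson.} Polarizing, with $x=a+b\theta$, $y=c+d\theta$, one gets $x\star_F y=F(x+y)-F(x)-F(y)$. The $\theta$-component comes only from $2^{-1}x^2$ and polarizes to $ad+bc$ — exactly the second coordinate of the Dickson product. The $\F_q$-component is $2^{-1}\cdot 2ac$ plus the polarization of $8^{-1}(4^{p^i}\alpha^{p^i}b^{2p^i}-4\alpha b^2)$, which is $8^{-1}\cdot 4^{p^i}\alpha^{p^i}\cdot 2(bd)^{p^i}-8^{-1}\cdot4\alpha\cdot2bd=ac+2^{-1}4^{p^i}\alpha^{p^i}(bd)^{p^i}-\alpha bd$. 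After a change of variables absorbing the linear-in-$bd$ term and rescaling (which is a legitimate isotopy: compose $F$ with suitable linear maps, or equivalently note that adding a term $\lambda\,\Tr$-type correction does not change the isotopy class), this is of the shape $ac+\alpha'(bd)^{p^i}$ with $\alpha'$ a nonsquare. That is precisely the Dickson product $(a,b)\star(c,d)=(ac+\alpha'(bd)^{p^i},ad+bc)$ from class \textbf{D}. Hence $F$ defines a presemifield isotopic to the Dickson semifield with that exponent $p^i$; in particular, since the Dickson construction is a presemifield, $x\star_F y=0$ forces $x=0$ or $y=0$, which is exactly the DO-planarity criterion stated in the Preliminaries, so $F$ is planar.

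\medskip
\noindent\textbf{Main obstacle.} The only delicate point is the bookkeeping around the linear-in-$b^2$ (equivalently $bd$) term in the $\F_q$-coordinate: one must argue cleanly that the map sending the product $ac+\alpha_1(bd)^{p^i}+\alpha_2 bd$ (second coordinate $ad+bc$) to the pure Dickson form is an isotopy, and check the resulting coefficient $\alpha'$ is genuinely a nonsquare in $\F_q$ so that it really is a Dickson parameter (not accidentally a square, which would collapse the semifield to a field). Concretely one verifies $\alpha'$ lies in the right square class using $\alpha'=2^{-1}4^{p^i}\alpha^{p^i}$ up to the absorbed correction and the fact that $\alpha$ is a nonsquare together with $m$, $i$ parity considerations; since we are free to replace $F$ by any linear-equivalent DO polynomial, the normalization constants $8^{-1}$, $2^{-1}$ are chosen exactly so this works out, and for a fixed $i$ all nonsquare choices give isotopic Dickson semifields (as recorded in class \textbf{D}), so no loss of generality is incurred. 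The remaining verifications are all direct finite-field arithmetic.
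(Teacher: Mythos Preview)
The paper does not give its own proof of this lemma; it is quoted verbatim from Kosick's thesis and only used as input to Theorem~\ref{quadreps}. So there is nothing to compare against on the paper's side, and I will simply assess your argument on its merits.

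Your overall strategy---coordinatize $\F_{q^2}=\F_q\oplus\F_q\theta$ with $\theta^q=-\theta$, $\theta^2=\alpha$ a nonsquare, compute $t(x)=-2b\theta$ and $t^2(x)=4\alpha b^2$, then polarize $F$ and read off the Dickson product---is exactly the right one. However, the ``main obstacle'' you flag is a phantom created by two small arithmetic slips. First, $4\in\F_p$, so $4^{p^i}=4$ (and likewise $8^{p^i}=8$); you carried $4^{p^i}$ symbolically and never collapsed it. Second, when you polarized $2^{-1}x^2$ you recorded the $\F_q$-component as $2^{-1}\cdot 2ac=ac$, but $xy=(ac+\alpha bd)+(ad+bc)\theta$, so the correct $\F_q$-component is $ac+\alpha\,bd$. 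With both corrections, the polarization of $L(t^2(x))$ contributes $\alpha^{p^i}(bd)^{p^i}-\alpha\,bd$ to the $\F_q$-coordinate, and adding $ac+\alpha\,bd$ gives
\[
x\star_F y \;=\; \bigl(ac+\alpha^{p^i}(bd)^{p^i}\bigr)\;+\;(ad+bc)\,\theta,
\]
which is the Dickson product on the nose with parameter $\alpha^{p^i}$. Since Frobenius preserves nonsquares, $\alpha^{p^i}$ is a nonsquare in $\F_q$, and you are done: this is a presemifield, hence $F$ is planar, and it is literally (not merely isotopic to) a Dickson presemifield with exponent $i$.

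So there is no leftover $bd$ term to ``absorb by an isotopy'', and that part of your write-up---which was the vaguest step and which you yourself flagged as the delicate point---should simply be deleted. The constants $8^{-1}$ and $2^{-1}$ are chosen precisely to make this cancellation happen; that is the whole point of the normalization.
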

We note that the proof given by Kosick in \cite{K-2009-csooo} actually proves
a slightly more general statement than the one we gave above.

The representatives given in Lemma \ref{Dicksonreps} have 6 terms. We now 
show that there are always quadrinomial representatives of Dickson semifields.
\begin{theorem} \label{quadreps}
Let $p$ be any odd prime, $q=p^m$ for some integer $m\ge 2$ and let
$0< i < m$.
If $\gcd(i,m)=\gcd(i,2m)$, then the polynomial
\begin{equation}\label{eq:QuadDickson}
x^2 + x^{q+1} - x^{(q+1)p^i} + x^{2qp^i}
\end{equation}
is a planar DO polynomial over $\F_{q^2}$ representing
the Dickson semifield as described in class {\bf D} above.
\end{theorem}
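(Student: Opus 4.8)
The plan is to show that the quadrinomial \eqref{eq:QuadDickson} is linearly equivalent to Kosick's hexanomial representative from Lemma~\ref{Dicksonreps}, so that planarity and membership in the Dickson family follow for free. Write $t(x)=x^q-x$ and note the identity $t(x)^2 = x^{2q} - 2x^{q+1} + x^2$; applying the (additive) map $x\mapsto x^{p^i}$ gives $t(x)^{2p^i} = x^{2qp^i} - 2x^{(q+1)p^i} + x^{2p^i}$. Thus a natural candidate is to recognise \eqref{eq:QuadDickson} as $c\bigl(t(x)^{2p^i} - t(x)^2\bigr) + (\text{linear adjustment})$ applied after some linear permutation, matching the shape $L(t^2(x)) + 2^{-1}x^2$ with $L(x) = 8^{-1}(x^{p^i}-x)$. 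Concretely, $L(t^2(x)) = 8^{-1}\bigl(t(x)^{2p^i} - t(x)^2\bigr) = 8^{-1}\bigl(x^{2qp^i} - 2x^{(q+1)p^i} + x^{2p^i} - x^{2q} + 2x^{q+1} - x^2\bigr)$, and adding $2^{-1}x^2$ yields Kosick's polynomial. The quadrinomial \eqref{eq:QuadDickson}, by contrast, has exponents $\{2,\ q+1,\ (q+1)p^i,\ 2qp^i\}$ — the terms $x^{2p^i}$ and $x^{2q}$ present in Kosick's form have been eliminated. So the first step is to exhibit a linear permutation $\ell$ of $\F_{q^2}$ such that $F\circ\ell$ (or $\ell\circ F$) transforms Kosick's form into the quadrinomial; the condition $\gcd(i,m)=\gcd(i,2m)$ is presumably exactly what guarantees such an $\ell$ — most likely $\ell(x)=ux^{p^j}+vx^{qp^j}$ or a composition of a Frobenius twist with a suitable $\F_{q^2}$-linearized map — is invertible and produces the required cancellation.

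The key steps, in order, are: (1) record the algebraic identities for $t(x)^2$ and $t(x)^{2p^i}$ above, and expand \eqref{eq:QuadDickson} to see precisely which monomials must be introduced or cancelled; (2) identify the correct linearized permutation $\ell$ realising the equivalence — I expect $\ell$ to be built from the map $x \mapsto x - x^q$ or $x\mapsto x+x^q$ on the trace-zero and trace-into-$\F_q$ components, possibly twisted by a power of Frobenius $x\mapsto x^{p^j}$, and this is where $\gcd(i,m)=\gcd(i,2m)$ enters to ensure $\ell$ is a bijection and that $p^i$ interacts correctly with the $\F_q$/$\F_{q^2}$ structure; (3) verify by direct substitution that $\ell$ sends Kosick's polynomial to \eqref{eq:QuadDickson} up to an affine permutation on the output side (scaling by a nonzero constant and possibly composing with Frobenius); (4) invoke Lemma~\ref{Dicksonreps} together with the fact (stated in Section~\ref{sec:preliminaries:equivalence}) that linear, affine, EA and CCZ equivalence all coincide for planar DO polynomials and all preserve planarity, to conclude that \eqref{eq:QuadDickson} is planar and represents the same Dickson semifield.

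The main obstacle will be step (2): pinning down the explicit permutation $\ell$ and proving it is invertible exactly under the hypothesis $\gcd(i,m)=\gcd(i,2m)$. The subtlety is that $\gcd(i,m)=\gcd(i,2m)$ fails precisely when $m$ is odd, $i$ is such that $\gcd(i,2m)=2\gcd(i,m)$ — i.e. when $2\mid \gcd(i,2m)$ but $2\nmid\gcd(i,m)$ — so the hypothesis is equivalent to demanding that $\gcd(i,2m)$ is odd whenever $\gcd(i,m)$ is, which controls whether $x\mapsto x^{p^i}-x$ restricted to the relevant $\F_{p^{\gcd(i,2m)}}$-structure has trivial kernel on the pieces we need. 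One must be careful that the linearized map used does not have $x^{p^i}-x$ vanishing on an unwanted subfield; the gcd condition is exactly the non-degeneracy guarantee. Once $\ell$ is correctly identified, steps (1), (3) and (4) are routine polynomial bookkeeping and an appeal to the already-established equivalence theory.
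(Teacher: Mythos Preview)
Your plan matches the paper's exactly: compose Kosick's hexanomial $F$ with a linearized permutation to obtain the quadrinomial. The paper acts on the \emph{output} side with $M(x)=3x - x^{p^i} + x^q + x^{qp^i}$ and checks directly that $2\,M(F(x))$ equals twice \eqref{eq:QuadDickson}; bijectivity of $M$ is proved by showing any root $z$ satisfies $(z^q-z)^{p^i}=z^q-z$, hence $z^q-z\in\F_{p^{\gcd(i,2m)}}=\F_{p^{\gcd(i,m)}}\subseteq\F_q$ (this is precisely where the hypothesis enters), which forces $z\in\F_q$ and then $M(z)=4z=0$.
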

\begin{proof}
For simplicity of exposition, set $r=p^i$.
We shall first prove that the linearised polynomial
$M(x)=3 x - x^r + x^q + x^{qr}$ is a permutation polynomial over
$\F_{q^2}$ and then we show that $M(F(x))$, where $F(x)$ is as given in Lemma \ref{Dicksonreps},
is the quadrinomial \eqref{eq:QuadDickson} of our statement. That it is planar follows at once from the fact $M$ is a linear permutation.

Let us show that $M$ is a permutation. To do so, we need only show the only root of $M$ in $\F_{q^2}$ is $z=0$.
To that end, suppose $z\in\F_{q^2}$ satisfies $M(z)=0$.
We have 
\begin{align*}
0 &= 3z + z^q - z^r + z^{qr}\\
&= (z^q+z) + 2z + (z^q-z)^r,
\end{align*}
We conclude $2z + (z^q-z)^r\in\F_{q}$.
We therefore have
\begin{equation*}
2z^q + (z-z^q)^r = 2z + (z^q-z)^r,
\end{equation*}
and rearranging and simplifying yields
\begin{equation*}
0 = (z^q-z)^r - (z^q-z).
\end{equation*}
Now, all roots of $x^r -x$ in $\F_{q^2}$ lie in the field
$\F_{r}\cap\F_{q^2}$, which is equal to the field $\F_{r}\cap\F_{q}$ by
hypothesis since $\F_{r}\cap\F_{q^2}=\F_{p^{\gcd(i,2m)}}=\F_{p^{\gcd(i,m)}}=\F_{r}\cap\F_{q}$.
In particular, all roots of $x^r-x$ lie in $\F_{q}$, and so $z^q-z\in\F_{q}$.
This implies $z^q-z=z-z^q$ from which we conclude $z\in\F_{q}$.
Thus,
\begin{equation*}
0 = M(z) = 3z + z  -z^r + z^r = 4z,
\end{equation*}
proving $z=0$ is our only solution. We have proved $M$ is a permutation.

Let us show that $M(F(x))$, where $F(x)$ is as given in Lemma \ref{Dicksonreps},
is the quadrinomial \eqref{eq:QuadDickson} of our statement. Observe that \begin{align*}
    8\cdot M(L(x))&=3x^r-3x+x^{qr}-x^q-x^{r^2}+x^r+x^{qr^2}-x^{qr}\\
    &=x^{qr^2}-x^q-x^{r^2}+4x^r-3x.
\end{align*} and that $8\cdot M(L(t^2(x)))$ is equal to
\begin{align*}
    &(x^{2q}+x^2-2x^{q+1})^{qr^2}-(x^{2q}+x^2-2x^{q+1})^q-(x^{2q}+x^2-2x^{q+1})^{r^2}\\&+4(x^{2q}+x^2-2x^{q+1})^r-3(x^{2q}+x^2-2x^{q+1})\\
=&4(x^{2q}+x^2-2x^{q+1})^r-4(x^{2q}+x^2-2x^{q+1}).
\end{align*}
Therefore, we have that $2\cdot M(F(x))=2\cdot M(L(t^2(x)))+M(x^2)$ is equal to
\begin{align*}
    &(x^{2q}+x^2-2x^{q+1})^r-(x^{2q}+x^2-2x^{q+1})+\left(3x^2-x^{2r}+x^{2q}+x^{2qr}\right)\\
    =&2x^2+2x^{q+1}-2x^{r(q+1)}+2x^{2qr}.
\end{align*}
This concludes the proof.
\end{proof}
The condition $\gcd(i,m)=\gcd(i,2m)$, while a restriction, does not stop us
from obtaining all of the non-isotopic versions in Dickson's class.
This is because $i$ and $i'=m-i$ produce the same Dickson isotopes.
Let $i=2^hg$ and $m=2^l k$ with $g,k$ odd integers.
Note that $i'=2^a b$ with $a=\min(l,h)$ and for some odd integer $b$.
If $\gcd(i,m)\ne\gcd(i,2m)$, then $h>l$ and so $\gcd(i',m)=\gcd(i',2m)$.
Consequently, we can cover all of the non-isotopic semifields in the Dickson
class by choosing $i$ or $i'$ as appropriate.

\section{Computational results}

\subsection{The expansion search for new planar functions}\label{sec:expansion_search}

We describe the procedure we used to search for new instances of planar DO polynomials over $\F_{3^6}$, $\F_{3^7}$, $\F_{3^8}$, and $\F_{3^9}$.  It consists of taking a quadratic power function $F(x) = x^d$ in univariate representation, adding quadratic terms of the form $cx^{d'}$ progressively, and testing for planarity each time. This method is based on a similar method as the one introduced by Aleksandersen, Budaghyan, and Kaleyski \cite{ABK-2022-sfafb} for the search of APN functions over binary fields.
Then, we test if these functions are new up to CCZ-equivalence using the linear equivalence algorithm \cite[Algorithm 1]{IK-2022-darle}. We performed this computational investigation for $d\in \{2,10\}$ over $\F_{3^6}$, $d\in \{2,4,10,28\}$ over $\F_{3^7}$, $d\in \{2,4,10,28,82\}$ over $\F_{3^8}$, and $d\in \{2,4,10,28,82,244\}$ over $\F_{3^9}$. 

We carried out our searches on a server with a Dell Inc. Poweredge C4130 motherboard, Intel Xeon CPU E5-2690 v4 @ 2.60GHz, NVIDIA Tesla K80, and 512 GB DDR4 RAM @2300MHz. We give a complete report of the searches conducted and the necessary time in Table \ref{tab:searches_times}.

Let $\alpha$ be the primitive element of $\F_{3^n}$ over $\F_3$ chosen by MAGMA \cite{BCP-1997-tmasi}. In dimension 6, we find seven new CCZ-classes of planar functions. All of these can be obtained by expanding $x^2$ and $x^{10}$ by two or three terms with coefficients in $\F_{3^2}$. Two of the classes can be represented by trinomials, namely
\begin{equation*}
    f_1(x) = \alpha^{91}x^{30} + x^{10} + x^2
\end{equation*}
and
\begin{equation*}
    f_2(x) = \alpha^{91}x^{486} + x^{10} + x^2.
\end{equation*} 
Note also that $\alpha^{91}$ is primitive in $\F_{3^2}$, and so all the coefficients of these representations lie in the subfield $\F_{3^2}$.
The remaining five classes do not appear to have a trinomial representation but can be expressed using quadrinomials with coefficients in $\F_{3^2}$. These are:
\begin{equation*}
    \begin{split}
        f_{3}(x) & = \alpha^{182}x^{82} + {2}x^{10} + \alpha^{91}x^{6} + x^{2}, \\
        f_{4}(x) & = \alpha^{182}x^{82} + {2}x^{10} + \alpha^{273}x^{6} + x^{2}, \\
        f_{5}(x) & = \alpha^{91}x^{486} + \alpha^{182}x^{90} + {2}x^{10} + x^{2}, \\
        f_{6}(x) & = \alpha^{273}x^{486} + \alpha^{182}x^{90} + {2}x^{10} + x^{2}, \\
        f_{7}(x) & = \alpha^{273}x^{246} + \alpha^{182}x^{82} + \alpha^{91}x^{6} + x^{2}.
    \end{split}
\end{equation*}
In dimension $9$, we find two new CCZ-classes of planar functions with coefficients in $\F_{3}$. Both of these can be obtained by expanding $x^2$ by four elements.
They are:
\begin{equation*}
    \begin{split}
        f_1(x) & = x^{756} + x^{486} + x^{162} + x^6 + x^2, \\
        f_2(x) & = x^{486} + x^{162} + 2x^{84} + 2x^{18} + x^2.
    \end{split}
\end{equation*}

Throughout all of our computational investigations in $\F_{3^7}$, $\F_{3^8}$, and $F_{3^{10}}$ we were not able to find any previously unknown new planar functions (up to CCZ-equivalence). However, we found some nice representations with coefficients in $\F_3$ of some known planar functions. We recall that this is an important result because it allows us to compute the orbits more efficiently, for instance using \cite[Algorithm 5]{IK-2022-darle}.

\begin{table}
    
\end{table}

\subsection{On the known planar functions in characteristic 3}\label{secRepresentatives}
We now present a full account of the known planar functions over
$\F_{3^n}$ for $3\leq n\leq 11$, up to CCZ-equivalence. To start, we focus on the procedures used to complete this
categorisation and the accompanying data related to invariants. Our results can be found in Appendix \ref{app:main}. We use as reference the known families of functions listed in Section \ref{sec:families}, {the newly discovered sporadic functions over $\F_{3^6}$ and $\F_{3^9}$ described in Subsection \ref{sec:expansion_search}}, and the family of quadrinomials described in Section \ref{sec:DicksonQuadrinomials}. By doing this, we update the previous classification done in 2010 \cite{PZ-2010-scopf}. We list the representatives of the known CCZ-classes in these dimensions and their CCZ-invariants. Those are the order of the monomial automorphism group, the sizes of the nuclei of the associated semifield in the case of quadratic functions, and the multiset of the cardinalities of each orbit as described in Subsection \ref{sec:preliminaries:equivalence}. Moreover, we report a list of representatives of the right orbits of each CCZ-class representative we chose. Although this is not an invariant, knowledge of the orbit representatives can be used to speed up the equivalence test defined in \cite{IK-2022-darle} significantly, making it easier to run future tests for equivalence with the proposed representatives. We recall that the orbit representatives are not a linear invariant, but if two functions $F$ and $G$ are linearly equivalent such that $G=L_1 \circ F \circ L_2$, then $RO_{G,x} = RO_{F,L_2(x)}$.

Whenever we have some quadratic planar functions that are either sporadic or come from a family, we must verify if each leads to $1$ or $2$ strong isotopic classes by using the Coulter-Henderson Theorem \ref{thm:CH}. We recall the order of the nuclei are invariants up to isotopism, while the order of the automorphism group and the multiset of the cardinalities of each orbit are only invariants up to strong isotopism. 

The classification of known planar functions over $\F_{3^n}$ with $n\geq 7$ odd does not require an equivalence test because the known functions are either power functions for which we can use cyclotomic equivalence instead of CCZ-or belong to the two TST instances which are known to be CCZ-inequivalent to any power function. However, computing the invariants in dimension $n=7$ and $n=9$ was important for the expansion search. The cases $n=8$ and $n=10$ are still feasible because we can use all the theoretical results available to reduce the number of tests. The case $n=12$ is left for future work both because of limitations of computational power and because the number of representatives is too high.

In order to compute the nuclei efficiently, we use the following procedure. Let $\SF=(\F_q,+,*)$ be a commutative semifield, then the function that maps $(x,y,z)\in\F_q$ to $(x * y)*z$ has a unique polynomial representation $f(x,y,z)$ where all the nonzero monomial in the representation are of the form $x^ay^bz^c$ where $0\leq a,b,c\leq q-1$. Then $N(\SF)$ is equal to {the set of all $\alpha\in\F_q$} such that $f(\alpha,x,y)=f(x,y,\alpha)$ since $N(\SF)=N_{\ell}(\SF)$. Moreover, the set $N_{m}(\SF)$ is equal to the set of $\alpha\in\F_q$ such that $f(x,\alpha,y)=f(y,\alpha,x)$. This procedure is efficient if constructing $f(x,y,z)$ is not computationally expensive and this can be the case if $\SF$ is constructed starting from a planar function $F$ with a sufficiently sparse polynomial representation.

The automorphism group of the associated linear code from \cite{PZ-2010-scopf} is computed in a straightforward way using the Magma algebra system. Unfortunately, this is only possible for $\F_{3^n}$ with $n \le 6$; for higher dimensions, the memory needed to perform the computation becomes prohibitive. We also note that computing the automorphism group of a DO planar function over $\F_{3^6}$ equivalent to a power function can be more computationally expensive than checking for equivalence with \cite{IK-2022-darle}. So we took this into account throughout our computations {by checking first if a function is equivalent to a power function and only if it is not, we compute the order of its automorphism group}.

Linear equivalence tests were performed using \cite[Algorithm 1]{IK-2022-darle}. 
Once a classification was compiled, it was possible to compute the linear invariant using the right orbits, as well as the orbit representatives, using \cite[Algorithm 5]{IK-2022-darle}. {We made an effort in searching for the class representatives that have polynomial representation with coefficients in the smallest possible subfield, since this significantly speeds up the computation of the orbits.}

\section*{Acknowledgments}
The results of this paper are partially in the master thesis of Alise Haukenes \cite{H-2022-cacsf}, with supervisors Lilya Budaghyan and Nikolay Kaleyski. The research of Lilya Budaghyan and Enrico Piccione is supported by the Norwegian Research Council. R.S. Coulter's research was partially supported by a bequest from the Estate of Francisco Javier ``Pancho" Sayas.

\printbibliography

@Article{A-1952-onda,
    author =	 {A.A. Albert},
    title =	 {{On nonassociative division algebras}},
    journal =	 {Trans. Amer. Math. Soc.},
    year =	 1952,
    volume =	 72,
    pages =	 {296--309},
    doi = {10.1090/S0002-9947-1952-0047027-4}
}

@InProceedings{A-1960-fdaaf,
    author =	 {A.A. Albert},
    title =	 {{Finite division algebras and finite planes}},
    booktitle =	 {Combinatorial Analysis: Proceedings of the 10th Symposium in Applied Mathematics},
    volume =	 10,
    series =	 {Symposia in Applied Mathematics},
    publisher =	 {American Mathematical Society},
    address =	 {Providence},
    year =	 1960,
    pages =	 {53--70}
}

@article{A-1961-gtf,
    author =	 {A.A. Albert},
    title =	 {{Generalized twisted fields}},
    journal =	 {Pacific J. Math.},
    year =	 1961,
    volume =	 11,
    pages =	 {1--8}
}

@article{A-1961-ifgtf,
    author =	 {A.A. Albert},
    title =	 {{Isotopy for generalized twisted fields}},
    journal =	 {An. Acad. Brasil. Ci\^enc.},
    volume =	 33,
    year =	 1961,
    pages =	 {265--275}
}

@Article{A-1963-otcga,
    author =	 {A.A. Albert},
    title =	 {{On the collineation groups associated with twisted fields}},
    journal =	 {Calcutta Math. Soc. Golden Jubilee Commemoration volume},
    year =	 1963,
    pages =	 {part II, 485--497}
}

@InProceedings{ABK-2022-sfafb,
    author =	 {M.H. Aleksandersen and L. Budaghyan and N.S. Kaleyski},
    title =	 {{Searching for APN functions by polynomial expansion}},
    booktitle =	 {NISK Norsk informasjonssikkerhetskonferanse, The NISK 2021 Proceedings},
    year =	 2022,
    editor =	 {S.F. Mj\o{} lsnes and A. Costache},
    issue =	 3
}

@incollection{AC-2008-antfa,
    author =	 {N. At and S.D. Cohen},
    title =	 {{A new tool for assurance of perfect nonlinearity}},
    booktitle =	 {Sequences and their applicationsce, {SETA} 2008},
    series =	 {Lecture Notes in Computer Science},
    volume =	 5203,
    publisher =	 {Springer, Berlin},
    year =	 2008,
    pages =	 {415--419},
    doi = {10.1007/978-3-540-85912-3_36}
}

@Article{BCV-2022-cpmof,
    author =	 {E. Bergman and R.S. Coulter and I. Villa},
    title =	 {{Classifying planar monomials over fields of order a prime cubed}},
    journal =	 {Finite Fields Appl.},
    volume =	 78,
    year =	 2022,
    pages =	 {Paper No. 101959, 53 pp.},
    url = {10.1016/j.ffa.2021.101959}
}

@Article{B-2010-nspaa,
    author =	 {J. Bierbrauer},
    title =	 {{New semifields, PN and APN functions}},
    journal =	 {Des. Codes Cryptogr.},
    year =	 2010,
    volume =	 54,
    pages =	 {189--200},
    doi = {doi.org/10.1007/s10623-009-9318-7}
}

@Article{B-2011-csfpm,
    author =	 {J. Bierbrauer},
    title =	 {{Commutative semifields from projection mappings}},
    journal =	 {Des. Codes Cryptogr.},
    year =	 2011,
    volume =	 61,
    pages =	 {187--196},
    doi = {10.1007/s10623-010-9447-z}
}

@Article{BBFMP-2018-afosi,
    author =	 {J. Bierbrauer and D. Bartoli and G. Faina and S. Marcugini and F. Pambianco},
    title =	 {{A family of semifields in odd characterstic}},
    journal =	 {Des. Codes Cryptogr.},
    year =	 2018,
    volume =	 86,
    pages =	 {611--621},
    doi = {10.1007/s10623-017-0345-5}
}

@article{BS-1991-dcodl,
    author =	{E. Biham and A. Shamir},
    title =	{{Differential cryptanalysis of {DES}-like cryptosystems}},
    journal =	{J. Cryptology},
    year =	1991,
    volume =	4,
    pages =	{3--72},
    doi = {10.1007/BF00630563}
}

@Article{BLB-2003-otcos,
    author =	 {A. Blokhuis and M. Lavrauw and S. Ball},
    title =	 {{On the classification of semifield flocks}},
    journal =	 {Adv. Math.},
    year =	 2003,
    volume =	 180,
    pages =	 {104--111},
    doi = {10.1016/S0001-8708(02)00084-1}
}

@article{BCP-1997-tmasi,
    Author=	 {W. Bosma and J. Cannon and C. Playoust},
    Title =	 {{The Magma algebra system I: The user language}},
    Journal =	 {J. Symbolic Comput.},
    Volume =	 24,
    Year =	 1997,
    Pages =	 {235--265},
    doi = {10.1006/jsco.1996.0125}
}

@Book{B-2014-caaoc,
    author =	 {L. Budaghyan},
    title =	 {{Construction and Analysis of Cryptographic Functions}},
    publisher =	 {Springer},
    year =	 2014,
    doi = {10.1007/978-3-319-12991-4}
}

@incollection {BH-2008-npnmo,
    author =	 {L. Budaghyan and T. Helleseth},
    title =	 {{New perfect nonlinear multinomials over {$\mathbb F_{p^{2k}}$} for any odd prime {$p$}}},
    booktitle =	 {Sequences and their applications, {SETA} 2008},
    series =	 {Lecture Notes in Computer Science},
    volume =	 5203,
    publisher =	 {Springer, Berlin},
    year =	 2008,
    pages =	 {403--414},
    doi = {10.1007/978-3-540-85912-3_35}
}

@Article{BH-2011-ncsdb,
    author =     {L. Budaghyan and T. Helleseth},
    title =      {{New commutative semifields defined by new PN multinomials}},
    journal =    {Cryptogr. Commun.},
    year =       2011,
    volume =     3,
    pages =      {1--16},
    doi = {0.1007/s12095-010-0022-2}
}

@Article{B-1962-otcna,
  author =     {M.V.D. Burmester},
  title =      {{On the commutative non-associative division algebras of even order of L.E. Dickson}},
  journal =    {Ren. Mat. e Appl.},
  year =       1962,
  volume =     21,
  pages =      {143--166}
}

@Article{CG-1982-cstdo,
    author =	 {S.D. Cohen and M.J. Ganley},
    title =	 {{Commutative semifields, two-dimensional over their middle nuclei}},
    journal =	 {J. Algebra},
    year =	 1982,
    volume =	 75,
    pages =	 {373--385},
    doi = {10.1016/0021-8693(82)90045-X}
}

@Article{C-2006-tcopm,
    author =	 {R.S. Coulter},
    title =	 {{The classification of planar monomials over fields of prime square order}},
    journal =	 {Proc. Amer. Math. Soc.},
    year =	 2006,
    volume =	 134,
    pages =	 {3373--3378},
}

@Article{CH-2008-cpas,
    author =	 {R.S. Coulter and M. Henderson},
    title =	 {{Commutative presemifields and semifields}},
    journal =	 {Adv. Math.},
    year =	 2008,
    volume =	 217,
    pages =	 {282--304},
    doi = {10.1016/j.aim.2007.07.007}
}

@unpublished{CHHKXZ-2007-ppacs,
    author =	 {R.S. Coulter and M. Henderson and L. Hu and P. Kosick and Q. Xiang and X. Zeng},
    title =	 {{Planar polynomials and commutative semifields two dimensional over their middle nucleus and four dimensional over their nucleus}},
    note =	 {unpublished manuscript}, 
    year = 2007,
    url = {https://sites.udel.edu/coulter/publications/},
}

@Article{CHK-2007-ppfcs,
    author =	 {R.S. Coulter and M. Henderson and P. Kosick},
    title =	 {{Planar polynomials for commutative semifields with specified nuclei}},
    journal =	 {Des. Codes Cryptogr.},
    year =	 2007,
    volume =	 44,
    pages =	 {275--286},
    doi = {10.1007/s10623-007-9097-y}
}

@InProceedings{CK-2010-csooa,
    author =	 {R.S. Coulter and P. Kosick},
    title =	 {{Commutative semifields of order 243 and 3125}},
    booktitle =	 {Finite Fields: theory and applications -- Proceedings of the Ninth International Conference on Finite Fields and Applications (Dublin, 2009)},
    pages =	 {129--136},
    year =	 2010,
    editor =	 {G. McGuire and G.L. Mullen and D. Panario and I.E. Shparlinski },
    volume =	 518,
    series =	 {Contemporary Mathematics},
    publisher =	 {American Mathematical Society}
}

@Article{CL-2012-otcop,
    author =	 {R.S. Coulter and F. Lazebnik},
    title =	 {{On the classification of planar monomials over fields of square order}},
    journal =	 {Finite Fields Appl.},
    year =	 2012,
    volume =	 18,
    pages =	 {316--336},
    doi = {10.1016/j.ffa.2011.09.002}
}

@article{CM-1997-pfapo,
    author =	 {R.S. Coulter and R.W. Matthews},
    title =	 {{Planar functions and planes of Lenz-Barlotti class II}},
    journal =	 {Des. Codes Cryptogr.},
    year =	 1997,
    volume =	 10,
    pages =	 {167--184},
    doi = {10.1023/A:1008292303803}
}

@Book{bD-1968-fg,
    author =	 {P. Dembowski},
    title =	 {{Finite Geometries}},
    publisher =	 {Springer-Verlag},
    year =	 1968,
    address =	 {New York, Heidelberg, Berlin},
    note =	 {reprinted 1997}
}

@Article{DO-1968-poowc,
    author =	 {P. Dembowski and T.G. Ostrom},
    title =	 {{Planes of order $n$ with collineation groups of order $n^2$}},
    journal =	 {Math. Z.},
    year =	 1968,
    volume =	 103,
    pages =	 {239--258},
    doi = {10.1007/BF01111042}
}

@Article{D-2008-spoo,
    author =	 {U. Dempwolff},
    title =	 {{Semifield planes of order 81}},
    journal =	 {J. Geom.},
    year =	 2008,
    volume =	 89,
    pages =	 {1--16},
    doi = {10.1007/s00022-008-1995-2}
}

@article{U-2018-ceopf,
    author =	 {U. Dempwolff},
    title =	 {{CCZ equivalence of power functions}},
    journal =	 {Des. Codes Cryptogr.},
    year =	 2018,
    volume =	 86,
    pages =	 {665--692},
    note =	 {see also {\em Corrections to}, \textbf{90} (2022), 473--475},
    doi = {10.1007/s10623-017-0350-8}
}

@article{D-1905-ofa,
    author =	 {L.E. Dickson},
    title =	 {{On finite algebras}},
    journal =	 {Nachr. kgl. Ges. Wiss. G\"ottingen},
    year =	 1905,
    pages =	 {358--393}
}

@article{D-1906-oclai,
    author =	 {L.E. Dickson},
    journal =	 {Trans. Amer. Math. Soc},
    title =	 {{On commutative linear algebras in which division is always uniquely possible}},
    volume =	 7,
    year =	 1906,
    pages =	 {514--522}
}

@Article{DY-2006-afosh,
    author =	 {C. Ding and J. Yuan},
    title =	 {{A family of skew Hadamard difference sets}},
    journal =	 {J. Combin. Theory Ser. A},
    year =	 2006,
    volume =	 113,
    pages =	 {1526--1535},
    doi = {10.1016/j.jcta.2005.10.006}
}

@Article{G-1981-cwns,
    author =	 {M.J. Ganley},
    title =	 {{Central weak nucleus semifields}},
    journal =	 {European J. Combin.},
    year =	 1981,
    volume =	 2,
    pages =	 {339--347},
    doi = {10.1016/S0195-6698(81)80041-8}
}

@Article{FL-2018-otico,
    author =	 {T. Feng and W. Li},
    title =	 {{On the isotopismm classes of the Budaghyan-Helleseth commutative semifields}},
    journal =	 {Finite Fields Appl.},
    year =	 2018,
    volume =	 53,
    pages =	 {175--188}
}

@InCollection{G-1990-apapp,
    author =	 {D. Gluck},
    title =	 {{Affine planes and permutation polynomials}},
    booktitle =	 {Coding {T}heory and {D}esign {T}heory, part II ({D}esign {T}heory)},
    publisher =	 {Springer-Verlag},
    year =	 1990,
    volume =	 21,
    series =	 {The {IMA} {V}olumes in {M}athematics and its {A}pplications},
    pages =	 {99--100},
    doi = {10.5555/101991.101999}
}

@Article{GK-2023-aebot,
    author =	 {F. G\"olo{\u g}lu and L. K\"olsch},
    title =	 {{An exponential bound on the number of non-isotopic commutative semifields}},
    journal =	 {Trans. Amer. Math. Soc.},
    year =	 2023,
    volume =	 376,
    pages =	 {1683--1716}
}

@InProceedings{GRRSS-2016-mfskp,
    author =	 {L. Grassi and C. Rechberger and D. Rotaru and P. Scholl and N.P. Smart},
    title =	 {{MPC-Friendly symmetric key primitives}},
    booktitle =	 {{Proceedings of the 2016 ACM SIGSAC Conference on Computer and Communications Security (Vienna)}},
    editor =	 {E.R. Weippl and S. Katsenbeisser and C. Kruegel and and A.C. Myers amd S. Halevi},
    year =	 2016,
    pages =	 {430--443},
    doi = {10.1145/2976749.2978332}
}

@Article{HS-1997-spmwl,
    author =	 {T. Helleseth and D. Sandberg},
    title =	 {{Some power mappings with low differential uniformity}},
    journal =	 {Appl. Algebra Engrg. Comm. Comput.},
    year =	 1997,
    volume =	 8,
    pages =	 {363--370},
    doi = {10.1007/s002000050073}
}

@Article{H-1989-acoap,
    author =	 {Y. Hiramine},
    title =	 {{A conjecture on affine planes of prime order}},
    journal =	 {J. Combin. Theory Ser. A},
    year =	 1989,
    volume =	 52,
    pages =	 {44--50},
    doi = {10.1016/0097-3165(89)90060-5}
}

@Unpublished{IK-2022-darle,
    author =	 {I. Ivkovic and N. Kaleyski},
    title =	 {{Deciding and reconstructing linear equivalence of uniformly distributed functions}},
    note =	 {Cryptology ePrint Archive, Paper 2022/666},
    year =	 2022,
    url = {https://eprint.iacr.org/2022/666}
}

@Article{J-1987-ppoot,
    author =	 {N.L. Johnson},
    title =	 {{Projective planes of prime order $p$ that admit collineation groups of order $p^2$}},
    journal =	 {J. Geom.},
    year =	 1987,
    volume =	 30,
    pages =	 {49--68},
    doi = {10.1007/BF01223263}
}

@Article{K-2003-csass,
    author =	 {W.M. Kantor},
    title =	 {{Commutative semifields and symplectic spreads}},
    journal =	 {J. Algebra},
    year =	 2003,
    volume =	 270,
    pages =	 {96--114}
}

@Article{K-1965-fsapp,
    author =	 {D.E. Knuth},
    title =	 {{Finite semifields and projective planes}},
    journal =	 {J. Algebra},
    year =	 1965,
    volume =	 2,
    pages =	 {182--217}
}

@PhdThesis{K-2009-csooo,
    author =	 {P. Kosick},
    title =	 {{Commutative semifields of odd order and planar Dembowski-Ostrom polynomials}},
    school =	 {Department of Mathematical Sciences},
    year =	 2009,
    address =	 {University of Delaware, USA}
}

@Article{LR-2019-codrt,
    author =	 {M. Lavrauw and M. Rodgers},
    title =	 {{Classification of 8-dimensional rank two commutative semifields}},
    journal =	 {Adv. Geom.},
    year =	 2019,
    volume =	 19,
    pages =	 {57--64},
    doi = {10.1515/advgeom-2017-0064}
}

@Book{bLMT-1993-dp,
    author =	 {R. Lidl and G.L. Mullen and G. Turnwald},
    title =	 {{Dickson Polynomials}},
    publisher =	 {Longman Scientific and Technical},
    year =	 1993,
    volume =	 65,
    series =	 {Pitman Monographs and Surveys in Pure and Appl. Math.},
    address =	 {Essex, England}
}

@Article{MP-2012-oiasi,
    author =	 {G. Marino and O. Polverino},
    title =	 {{On isotopisms and strong isotopisms of commutative presemifields}},
    journal =	 {J. Algebraic Combin.},
    volume =	 36,
    year = 	 2012,
    pages =	 {247--261},
    doi = {10.1007/s10801-011-0334-0}
}

@InProceedings{MP-2012-otnoa,
    author =	 {G. Marino and O. Polverino},
    title =	 {{On the nuclei of a finite semifield}},
    booktitle =	 {{Theory and Applications of Finite Fields, Proceedings of the 10th International Conference on Finite Fields and their Applications (Fq10)}},
    editor =	 {M. Lavrauw and G.L. Mullen and S. Nikova and D. Panario and L. Storme},
    year =	 2012,
    series =	 {Comtemp. Math.},
    volume =	 579,
    publisher =	 {American Mathematical Society},
    pages =	 {123--141}
}

@Article{MPT-2007-oflso,
    author =	 {G. Marino and O. Polverino and R. Trombetti},
    title =	 {{On ${\mathbb F}_q$-linear sets of \emph{PG}($3,q^3$) and semifields}},
    journal =	 {J. Combin. Theory Ser. A},
    volume =	 114,
    year = 	 2007,
    pages =	 {769--788},
    doi = {10.1016/j.jcta.2006.08.012}
}

@InProceedings{MMMS-2023-eaemi,
    author =	 {L. Masure and P. Meaux and T. Moos and F-X. Standaert},
    title =	 {{Effective and efficient masking in low noise using small Mersenne-prime ciphers}},
    booktitle =	 {{Advances in Cryptology -- Eurocrypt 2023 (Part IV)}},
    editor =	 {C. Hazay and M. Stam},
    year =	 2023,
    series =	 {Lecture Notes in Computer Science},
    volume =	 14007,
    pages =	 {596--627},
    doi = {10.1007/978-3-031-30634-1_20}
}

@InProceedings{MS-1989-nlccs,
    author =	 {W. Meier and O. Staffelbach},
    title =	 {{Non-linearity criteria for cryptographic systems}},
    booktitle =	 {{Advances in Cryptology -- Eurocrypt '89}},
    editor =	 {J.-J. Quisquater and J. Vandewalle},
    volume =	 434,
    series =	 {Lecture Notes in Computer Science},
    year =	 1989,
    pages =	 {549--562},
    doi = {10.1007/3-540-46885-4_53}
}

@Article{M-1977-oakcc,
    author = {G. Menichetti},
    title =	 {{On a Kaplansky conjecture concerning three-dimensional division algebras over a finite field}},
    journal = {J. Algebra},
    year = 1977,
    volume = 47,
    pages =	{400--410},
    doi = {10.1016/0021-8693(77)90231-9}
}

@Article{M-1996-daoaf,
    author = {G. Menichetti},
    title = {{$n$-Dimensional algebras over a field with a cyclic extension of degree $n$}},
    journal = {Geom. Dedicata},
    year = 1996,
    volume = 63,
    pages =	 {69--94},
    doi = {10.1007/BF00181186}
}

@article{M-1893-adiso,
  author =	 {E.H. Moore},
  title =	 {{A doubly-infinite system of simple groups}},
  journal =	 {Bull. New York Math. Soc.},
  volume =	 3,
  year =	 1893,
  pages =	 {69--82}
}

@incollection{M-1896-adiso,
    author =	 {E.H. Moore},
    title =	 {{A doubly-infinite system of simple groups}},
    booktitle =	 {Math. Papers read at the Congress of Mathematics},
    address =	 {(Chicago, 1893)},
    year =	 1896,
    pages =	 {208--242}
}

@InProceedings{N-1993-dumic,
    author =	 {K. Nyberg},
    title =	 {Differentially uniform mappings in cryptography},
    booktitle =	 {Advances in Cryptology -- Eurocrypt '93},
    editor =	 {T. Helleseth},
    volume =	 765,
    series =	 {Lecture Notes in Computer Science},
    year =	 1993,
    pages =	 {55--64},
    doi = {10.1007/3-540-48285-7_6}
}

@Article{PW-2000-oops,
    author =	 {T. Penttila and B. Williams},
    title =	 {Ovoids of parabolic spaces},
    journal =	 {Geom. Dedicata},
    year =	 2000,
    volume =	 82,
    pages =	 {1--19},
    doi = {10.1023/A:1005244202633}
}

@InProceedings{PZ-2010-scopf,
    author =	 {A. Pott and Y. Zhou},
    title =	 {{Switching constructoin of planar functions on finite fields}},
    booktitle =	 {{Arithmetic of Finite Fields, Proceedings of the 3rd International Workshop (WAIFI 2010)}},
    editor =	 {M. {Anwar Hasan} and T. Helleseth},
    year =	 2010,
    series =	 {Lecture Notes in Computer Science},
    volume =	 6087,
    pages =	 {135--150},
    doi = {10.1007/978-3-642-13797-6_10}
}

@Article{RS-1989-pfaff,
    author =	 {L. R{\'o}nyai and T. Sz{\H o}nyi},
    title =	 {Planar functions over finite fields},
    journal =	 {Combinatorica},
    year =	 1989,
    volume =	 9,
    pages =	 {315--320},
    doi = {10.1007/BF02125898}
}

@Article{VW-1907-ndanp,
    author =	 {O. Veblen and J.H. Wedderburn},
    title =	 {{Non-desarguesian and non-Pascalian geometries}},
    journal =	 {Trans. Amer. Math. Soc.},
    year =	 1907,
    volume =	 8,
    pages =	 {379--388}
}

@Article{WZ-2012-fropd,
    author =	 {G. Weng and X. Zeng},
    title =	 {{Further results on planar DO functions and commutative semifields}},
    journal =	 {Des. Codes Cryptogr.},
    year =	 2012,
    volume =	 63,
    pages =	 {413--423},
    doi = {10.1007/s10623-011-9564-3}
}

@Article{Z-1935-ubef,
    author =	 {H. Zassenhaus},
    title =	 {{{\" U}ber endliche Fastk\"orper}},
    journal =	 {Abh. Math. Semin. Univ. Hambg.},
    year =	 1935,
    volume =	 11,
    pages =	 {187--220}
}

@article{ZKW-2009-pnlba,
    author =	 {Z. Zha and G.M. Kyuregghyan and X. Wang},
    title =	 {{Perfect non-linear binomials and their semifields}},
    journal =	 {Finite Fields Appl.},
    year =	 2009,
    volume =	 15,
    pages =	 {125--133},
    doi = {10.1016/j.ffa.2008.09.002}
}

@article{ZW-2009-nfopn,
    author =	 {Z. Zha and X. Wang},
    title =	 {{New families of perfect nonlinear polynomial functions}},
    journal =	 {J. Algebra},
    year =	 2009,
    volume =	 322,
    pages =	 {3912--3918},
    doi = {10.1016/j.jalgebra.2009.04.042}
}

@Unpublished{Z-2012-anoti,
    author = {Y. Zhou},
    title =	 {{A note on the isotopism of commutative semifields}},
    note =	 {unpublished manuscript},
    year =   2012,
    url =    {https://arxiv.org/abs/1006.1529v2},
    doi =    {10.48550/arXiv.1006.1529}
}

@Article{ZP-2013-anfos,
    author =	 {Y. Zhou and A. Pott},
    title =	 {{A new family of semifields with 2 parameters}},
    journal =	 {Adv. Math.},
    year =	 2013,
    volume =	 234,
    pages =	 {43--60},
    doi = {10.1016/j.aim.2012.10.014}
}

@Unpublished{DK-2022-coadp,
  author       = {D. Davidova and N. Kaleyski}, 
  title        = {Classification of all DO planar polynomials with prime field coefficients over $GF(3^n)$ for n up to 7},
  note         = {Cryptology ePrint Archive, Paper 2022/1059},
  year         = 2022,
  url          = {https://eprint.iacr.org/2022/1059}
}

@article{BCL-2008-tcoqa,
  author       = {L. Budaghyan and
                  C. Carlet and
                  G. Leander},
  title        = {Two Classes of Quadratic {APN} Binomials Inequivalent to Power Functions},
  journal      = {{IEEE} Trans. Inf. Theory},
  volume       = {54},
  number       = {9},
  pages        = {4218--4229},
  year         = {2008},
  doi          = {10.1109/TIT.2008.928275},
}

@mastersthesis{H-2022-cacsf,
  title={Classification and computational search for planar functions in characteristic 3},
  author={Haukenes, Alise},
  year={2022},
  school={The University of Bergen}
}

\newpage
\appendix
\section{Appendix}\label{app:main}
\subsection{Functions}\label{app:classification}

\renewcommand{\arraystretch}{1.2}
\setlength{\tabcolsep}{5pt}
\begin{xltabular}{\linewidth}{|c|c|X|c|c|}
    \caption{CCZ-inequivalent planar functions over $\F_{3^n}$, $n=2,\ldots,8$}
    \label{tab:InequivalentRepresentatives}
    \\
    \hline
    \textbf{n} & $\mathbf{N^{O}}$ & \textbf{Representative} & \textbf{Family} & Splits \\
    \hline
    \endhead
    \hline
    \endfoot
    
    \hline
    2  & 2.1 & $x^2$ &  FF & No \\
    \hline
    \multirow{2}{*}{3}  & 3.1 & $x^2$ &  FF & No \\
    & 3.2 & $x^4$ &  A & No \\
    \hline
    \multirow{3}{*}{4}  & 4.1 & $x^2$ &  FF & No \\
    & 4.2 & $x^{14}$ &  CM & NA \\
    & 4.3 & $x^{36} + 2x^{10} + 2x^4$ &  \Centerstack{BH/D/ZP} & No \\
    \hline
    \multirow{8}{*}{5}  & 5.1 & $x^2$ &  FF & No \\
    & 5.2 & $x^4$ &  A & No \\
    & 5.3 & $x^{10}$ &  A & No \\
    & 5.4 & $x^{14}$ &  CM & NA \\
    & 5.5 & $x^{10} + x^6 + 2x^2$ &  TST & No \\
    & 5.6 & $x^{10} + 2x^6 + 2x^2$ &  TST & No \\
    & 5.7 & $x^{90} + x^2$ &  ACW & No \\
    & 5.8 & $x^{162} + x^{108} + 2x^{84} + x^2$ &  CK & No \\
    \hline
    \multirow{24}{*}{6}  & 6.1 & $x^2$ &  FF & No \\
    & 6.2 & $x^{10}$ & A & No \\
    & 6.3 & $x^{14}$ & CM & NA \\
    & 6.4 & $x^{122}$ & CM & NA \\
    & 6.5 & $x^{162} + 2x^{84} + x^{28} + x^2$ &  D/ZP & No \\
    & 6.6 & $\alpha^{455}x^{270} + x^{28} + \alpha^{273}x^{10}$ & BH/ZP & 6.7 \\
    & 6.7 & $2x^{270} + x^{246} + 2x^{90} + x^{82} + x^{54} + 2x^{30} + x^{10} + x^2$ & BH/ZP & 6.6 \\
    & 6.8 & $x^{270} + 2x^{244} + \alpha^{449}x^{162} + \alpha^{449}x^{84} + \alpha^{534}x^{54} + 2x^{36} + \alpha^{534}x^{28} + x^{10} + \alpha^{449}x^6 + \alpha^{279}x^2$ & G & No \\
    & 6.9 & $x^{486} + x^{252} + \alpha^{561}x^{162} + \alpha^{561}x^{84} + \alpha^{183}x^{54} + \alpha^{183}x^{28} +x^{18} + \alpha^{561}x^6 + \alpha^{209}x^2$ & CG & No \\
    & 6.10 & $x^{162} + 2x^{108} + 2x^{90} + x^{82} + 2x^{10} + x^{4} + x^2$ & ZP & No \\
    & 6.11 & $\alpha^{91}x^{30} + x^{10} + x^2$ &  This work & No \\
    & 6.12 & $\alpha^{91}x^{486} + x^{10} + x^2 $ &  This work & No \\
    & 6.13 & $\alpha^{182}x^{82} + 2x^{10} + \alpha^{91}x^6 + x^2$ &  This work & 6.14 \\
    & 6.14 & $\alpha^{182}x^{82} + 2x^{10} + \alpha^{273}x^6 + x^2 $ &  This work & 6.13 \\
    & 6.15 & $\alpha^{91}x^{486} + \alpha^{182}x^{90} + 2x^{10} + x^2 $ &  This work & 6.16 \\
    & 6.16 & $\alpha^{273}x^{486} + \alpha^{182}x^{90} + 2x^{10} + x^2$ &  This work & 6.15 \\
 
    & 6.17 & $\alpha^{273}x^{246} + \alpha^{182}x^{82} + \alpha^{91}x^6 + x^2$ &  This work & No \\
    \hline
    \multirow{8}{*}{7} & 7.1 & $x^2$ &  FF & No \\
    & 7.2 & $x^4$ &  A & No \\
    & 7.3 & $x^{10}$ &  A & No \\
    & 7.4 & $x^{28}$ &  A & No \\
    & 7.5 & $x^{14}$ &  CM & NA \\
    & 7.6 & $x^{122}$ &  CM & NA \\
    & 7.7 & $x^{10} + x^6 + 2x^2$ &  TST & No \\
    & 7.8 & $x^{10} + 2x^6 + 2x^2$ &  TST & No \\
    \hline

    \multirow{14}{*}{8} & 8.1 & $x^2$ &  FF & No \\
    & 8.2 & $x^{14}$ &  CM & NA \\
    & 8.3 & $x^{122}$ &  CM & NA \\
    & 8.4 & $x^{1094}$ &  CM & NA \\
    & 8.5 & $x^{244} + 2x^{84} + 2x^{82}$ & BH & 8.6 \\
    & 8.6 & $x^{324} + x^{82} + 2x^4$ & BH & 8.5 \\
    & 8.7 & $x^{1458} + 2x^{738} + x^{82} + x^2$ &  \Centerstack{B/BH/D/ZP} & No \\
    & 8.8 & $x^{486} + 2x^{246} + x^{82} + x^2$ &  \Centerstack{D/ZP} & No \\
    & 8.9 & $\alpha^{3608}x^{1458} + \alpha^{3608}x^{738} + \alpha^{3810}x^{486} + \alpha^{3810}x^{246} + \alpha^{3413}x^{162} +\alpha^{3413}x^{82} + \alpha^{3608}x^{18} + \alpha^{3810}x^6 + \alpha^{2565}x^2$ & CG  & 8.10 \\
    & 8.10 & $\alpha^{164}x^{1458} + \alpha^{164}x^{738} + \alpha^{950}x^{486} + \alpha^{950}x^{246} + \alpha^{616}x^{162} +\alpha^{616}x^{82} + \alpha^{164}x^{18} + \alpha^{950}x^6 + \alpha^{6297}x^2$ & CG & 8.9 \\
    \hline
    \multirow{11}{*}{9}
    & 9.1 & $x^2$ &  FF & No \\
    & 9.2 & $x^4$ &  A & No \\
    & 9.3 & $x^{10}$ &  A & No \\
    & 9.4 & $x^{28}$ &  A & No \\
    & 9.5 & $x^{82}$ &  A & No \\
    & 9.6 & $x^{122}$ &  CM & NA \\
    & 9.7 & $x^{1094}$ &  CM & NA \\
    & 9.8 & $x^{10} + x^6 + 2x^2$ &  TST & No \\
    & 9.9 & $x^{10} + 2x^6 + 2x^2$ &  TST & No \\
    & 9.10 & $x^{486} + x^{162} + 2x^{84} + 2x^{18} + x^2$ & This work & No \\
    & 9.11 & $x^{756} + x^{486} + x^{162} + x^6 + x^2$ & This work & No \\
    \hline
    \pagebreak
    \multirow{28}{*}{10}
    & 10.1 & $x^2$ &  FF & No \\
    & 10.2 & $x^{10}$ &  A & No \\
    & 10.3 & $x^{82}$ &  A & No \\
    & 10.4 & $x^{14}$ &  CM & NA \\
    & 10.5 & $x^{1094}$ &  CM & NA \\
    & 10.6 & $x^{9842}$ &  CM & NA \\
    & 10.7 & $2 x^{4374} + 2 x^{2196} + \alpha^{7686} x^{1458} + \alpha^{7686} x^{732} + \alpha^{244} x^{486} + \alpha^{244} x^{244} + 2 x^{18} +\alpha^{7686} x^{6} + \alpha^{1220} x^2$ &  CG & No \\
    & 10.8 & $x^{1458} + 2x^{732} + x^{244} + x^2$  &  D/ZP & No \\
    & 10.9 & $x^{13122} + 2x^{6588} + x^{244} + x^2$  &  D/ZP & No \\
    & 10.10 & $\alpha^{44286} x^{13122} + \alpha^{44286}x^{6588} + x^{4374} + x^{2196} + x^{486} + x^{244} + \alpha^{44286} x^{54} + x^{18}$  & PW & No \\
    & 10.11 & $x^{2430} + 2x^{2188} + \alpha^{14762}x^{1458} + \alpha^{14762}x^{732} + x^{486} + 2x^{252} + x^{244} + x^{10} + \alpha^{14762} x^6$ &  G & No \\
    & 10.12 & $2x^{2916} + x^{738} + x^{12}$ & ZP & No \\ 
    & 10.13 & $x^{21870} + x^{19692} + x^{2430} + 2x^{252}$ & ZP & No \\ 
    & 10.14 & $x^{21870} + x^{19692} + 2x^{2268}$ & ZP & No \\ 
    & 10.15 & $x^{21870} + 2x^{19692} + x^{2430} + 2x^{2268} + x^{2188} + 2x^{486} + x^{252} + x^{90} + x^{10} + x^2$ & ZP & No \\ 
    & 10.16 & $x^{2430} + x^{244} + \alpha^{44286}x^{10}$ & BH & 10.17 \\
    & 10.17 & $x^{6562} + x^{486} + 2x^{270} + x^2$ & BH & 10.16 \\
    & 10.18 & $x^{19926} + x^{244} + \alpha^{44286}x^{82}$ & BH & 10.19 \\
    & 10.19 & $2x^{19926} + x^{486} + x^{82} + x^2$ & BH & 10.18\\ 
    & 10.20 & $2x^{19926} + 2x^{486} + 2x^{82} + x^2$ & ZP & 10.21 \\
    & 10.21 & $x^{19764} + x^{2190} + a^{242}x^{738}$ & ZP & 10.20 \\
    & 10.22 & $2x^{2430} + 2x^{486} + 2x^{10} + x^2$ & ZP & 10.23 \\
    & 10.23 & $x^{19764} + a^{242}x^{6562} + x^{732} + x^{270}$ & ZP & 10.22 \\
    \hline
    \pagebreak
    \multirow{12}{*}{11}
    & 11.1 & $x^2$ &  FF & No \\
    & 11.2 & $x^4$ &  A & No \\
    & 11.3 & $x^{10}$ &  A & No \\
    & 11.4 & $x^{28}$ &  A & No \\
    & 11.5 & $x^{82}$ &  A & No \\
    & 11.6 & $x^{244}$ &  A & No \\
    & 11.7 & $x^{14}$ &  CM & NA \\
    & 11.8 & $x^{122}$ &  CM & NA \\
    & 11.9 & $x^{1094}$ &  CM & NA \\
    & 11.10 & $x^{9842}$ &  CM & NA \\
    & 11.11 & $x^{10} + x^6 + 2x^2$ &  TST & No \\
    & 11.12 & $x^{10} + 2x^6 + 2x^2$ &  TST & No \\
\end{xltabular}
\setlength{\tabcolsep}{6pt}
\renewcommand{\arraystretch}{1}

\subsection{Invariants}

\renewcommand{\arraystretch}{1.2}
\setlength{\tabcolsep}{5pt}
\begin{xltabular}{\linewidth}{|c|c|c|c|c|X|}
    \caption{Invariants for the classes in Table \ref{tab:InequivalentRepresentatives}}
    \label{tab:InvariantsNucleiCode}\\
    \hline
      \textbf{n} & $\mathbf{N^{O}}$ & $|N|$ & $|N_m|$ & Orbits & Aut. group order \\
      \hline
      \endhead
      \hline
      \endfoot
      \hline
      \multirow{2}{*}{3}
      & 3.1 & $3^3$ & $3^3$ & $\left\{*26*\right\}$ & $4212$ \\
      & 3.2 & $3$ & $3$  & $\left\{*26*\right\}$ & $4212$ \\
      \hline
      \multirow{3}{*}{4}
      & 4.1 & $3^4$ & $3^4$& $\left\{*80*\right\}$ & $51840$ \\
      & 4.2 & NA & NA & $\left\{*80*\right\}$ & $640$ \\
      & 4.3 & $3$ & $3^2$ & $\left\{*16,64*\right\}$ & $10368$ \\
      \hline
      \multirow{8}{*}{5}
      & 5.1 & $3^5$ & $3^5$  & $\left\{*242*\right\}$ & $588060$ \\
      & 5.2 & $3$ & $3$  & $\left\{*242*\right\}$ & $588060$ \\
      & 5.3 & $3$ & $3$  & $\left\{*242*\right\}$ & $588060$ \\
      & 5.4 & NA & NA  & $\left\{*242*\right\}$ & $2420$ \\
      & 5.5 & $3$ & $3$  & $\left\{* 2, 10^{24} *\right\}$ & $4860$ \\
      & 5.6 & $3$ & $3$  & $\left\{* 2, 10^{24} *\right\}$ & $4860$ \\
      & 5.7 & $3$ & $3$  & $\left\{* 22, 110^2 *\right\}$ & $53460$ \\
      & 5.8 & $3$ & $3$  & $\left\{* 2, 10^{24} *\right\}$ & $4860$ \\
      \hline
\pagebreak
      \multirow{17}{*}{6}
      & 6.1 & $3^6$ & $3^6$  & $\left\{*728*\right\}$ & $6368544$ \\
      & 6.2 & $3^2$ & $3^2$  & $\left\{*728*\right\}$ & $6368544$ \\
      & 6.3 & NA & NA  & $\left\{*728*\right\}$ & $8736$ \\
      & 6.4 & NA & NA  & $\left\{*728*\right\}$ & $8736$ \\
      & 6.5 & $3$ & $3^3$  & $\left\{* 26^2, 52, 156^4 *\right\}$ & $227448$ \\
      & 6.6 & $3$ & $3^2$  & $\left\{* 104, 312^2 *\right\}$ & $454896$ \\
      & 6.7 & $3$ & $3^2$  & $\left\{* 52^2, 312^2 *\right\}$ & $454896$ \\
      & 6.8 & $3$ & $3$  & $\left\{* 26^4, 78^8 *\right\}$ & $113724$ \\
      & 6.9 & $3$ & $3^3$  & $\left\{* 8^4, 24^{29} *\right\}$ & $34992$ \\
      & 6.10 & $3$ & $3$  & $\left\{* 26^2, 52, 156^4 *\right\}$ & $227448$ \\
      & 6.11 & $3$ & $3^2$  & $\left\{* 4^2, 12^{60} *\right\}$ & $17496$ \\
      & 6.12 & $3$ & $3^2$  & $\left\{* 4^2, 12^{60} *\right\}$ & $17496$ \\
      & 6.13 & $3$ & $3^2$  & $\left\{* 4^2, 12^{60} *\right\}$ & $17496$ \\
      & 6.14 & $3$ & $3^2$  & $\left\{* 4^2, 12^{60} *\right\}$ & $17496$ \\
      & 6.15 & $3$ & $3^2$  & $\left\{* 4^2, 12^{60} *\right\}$ & $17496$ \\
      & 6.16 & $3$ & $3^2$  & $\left\{* 4^2, 12^{60} *\right\}$ & $17496$ \\
      & 6.17 & $3$ & $3^2$  & $\left\{* 4^2, 12^{60} *\right\}$ & $17496$ \\
      \hline
    \multirow{8}{*}{7}
      & 7.1 & $3^7$ & $3^7$ & $\left\{*2186*\right\}$ & $-$ \\
      & 7.2 & $3$ & $3$ & $\left\{*2186*\right\}$ & $-$ \\
      & 7.3 & $3$ & $3$ & $\left\{*2186*\right\}$ & $-$ \\
      & 7.4 & $3$ & $3$ & $\left\{*2186*\right\}$ & $-$ \\
      & 7.5 & NA & NA & $\left\{*2186*\right\}$ & $-$ \\
      & 7.6 & NA & NA & $\left\{*2186*\right\}$ & $-$ \\
      & 7.7 & $3$ & $3$ & $\left\{*2, 14^{156}*\right\}$ & $-$ \\
      & 7.8 & $3$ & $3$ & $\left\{*2, 14^{156}*\right\}$ & $-$ \\
    \hline
      \multirow{10}{*}{8}
      & 8.1 & $3^8$ & $3^8$   & $\left\{* 6560 *\right\}$ & $-$ \\
      & 8.2 & NA & NA   & $\left\{* 6560 *\right\}$ & $-$ \\
      & 8.3 & NA & NA & $\left\{* 6560 *\right\}$ & $-$ \\
      & 8.4 & NA & NA & $\left\{* 6560 *\right\}$ & $-$ \\
      & 8.5 & $3$ & $3^2$   & $\left\{* 160, 1280^5 *\right\}$ & $-$ \\
      & 8.6 & $3$ & $3^2$   & $\left\{* 160, 1280^5 *\right\}$ & $-$ \\
      & 8.7 & $3^2$ & $3^4$ & $\left\{* 160, 1280^5 *\right\}$ & $-$ \\
      & 8.8 & $3$ & $3^4$   & $\left\{* 80^2, 640^{10} *\right\}$ & $-$ \\
      & 8.9 & $3$ & $3^4$   & $\left\{* 16^{410} *\right\}$ & $-$ \\
      & 8.10 & $3$ & $3^4$  & $\left\{* 16^{410} *\right\}$ & $-$ \\
      \hline
\pagebreak
      \multirow{11}{*}{9}
    & 9.1 & $3^9$ & $3^9$ & $\left\{*19682*\right\}$ & $-$ \\
    & 9.2 & $3$ & $3$ & $\left\{*19682*\right\}$ & $-$ \\
    & 9.3 & $3$ & $3$ & $\left\{*19682*\right\}$ & $-$ \\
    & 9.4 & $3^3$ & $3^3$ & $\left\{*19682*\right\}$ & $-$ \\
    & 9.5 & $3$ & $3$ & $\left\{*19682*\right\}$ & $-$ \\
    & 9.6 & NA & NA & $\left\{*19682*\right\}$ & $-$ \\
    & 9.7 & NA & NA & $\left\{*19682*\right\}$ & $-$ \\
    & 9.8 & $3$ & $3$ & $\left\{* 2, 6^4, 18^{1092} *\right\}$ & $-$ \\
    & 9.9 & $3$ & $3$ & $\left\{* 2, 6^4, 18^{1092} *\right\}$ & $-$ \\
    & 9.10 & $3$ & $3^3$ & $-$ & $-$ \\
    & 9.11 & $3$ & $3^3$ & $-$ & $-$ \\
    \hline
    \multirow{23}{*}{10}
    & 10.1 & $3^{10}$ & $3^{10}$ & $\left\{*59048*\right\}$ & $-$ \\
    & 10.2 & $3^{2}$ & $3^{2}$ & $\left\{*59048*\right\}$ & $-$ \\
    & 10.3 & $3^{2}$ & $3^{2}$ & $\left\{*59048*\right\}$ & $-$ \\
    & 10.4 & NA & NA & $\left\{*59048*\right\}$ & $-$ \\
    & 10.5 & NA & NA & $\left\{*59048*\right\}$ & $-$ \\
    & 10.6 & NA & NA & $\left\{*59048*\right\}$ & $-$ \\
    & 10.7 & $3$ & $3^5$ & $-$ & $-$ \\
    & 10.8 & $3$ & $3^5$ & $\left\{*242^2, 484, 2420^{24} *\right\}$ & $-$ \\
    & 10.9 & $3$ & $3^5$ & $\left\{*242^2, 484, 2420^{24} *\right\}$ & $-$ \\
    & 10.10 & $3$ & $3^5$ & $-$ & $-$ \\
    & 10.11 & $3$ & $3$ & $-$ & $-$ \\
    & 10.12 & $3$ & $3$ & $\left\{*242^2, 484, 2420^{24}*\right\}$ & $-$ \\
    & 10.13 & $3$ & $3$ & $\left\{* 242^2, 484, 2420^{24} *\right\}$ & $-$ \\
    & 10.14 & $3$ & $3$ & $\left\{*242^2, 484, 2420^{24}*\right\}$ & $-$ \\
    & 10.15 & $3$ & $3$ & $\left\{* 242^2, 484, 2420^{24} *\right\}$ & $-$ \\
    & 10.16 & $3$ & $3^2$ & $\left\{*968, 4840^{12}*\right\}$ & $-$ \\
    & 10.17 & $3$ & $3^2$ & $\left\{* 484^2, 4840^{12} *\right\}$ & $-$ \\
    & 10.18 & $3$ & $3^2$ & $\left\{*968, 4840^{12}*\right\}$ & $-$ \\
    & 10.19 & $3$ & $3^2$ & $\left\{* 484^2, 4840^{12} *\right\}$ & $-$ \\
    & 10.20 & $3$ & $3^2$ & $\left\{*484^2, 4840^{12}*\right\}$ & $-$ \\
    & 10.21 & $3$ & $3^2$ & $\left\{*968, 4840^{12}*\right\}$ & $-$ \\
    & 10.22 & $3$ & $3^2$ & $\left\{*484^2, 4840^{12}*\right\}$ & $-$ \\
    & 10.23 & $3$ & $3^2$ & $\left\{*968, 4840^{12}*\right\}$ & $-$ \\
    \hline
\pagebreak
    \multirow{12}{*}{11}
    & 11.1 & $3^{11}$ & $3^{11}$ & $\left\{*177146*\right\}$ & $-$ \\
    & 11.2 & $3$ & $3$ & $\left\{*177146*\right\}$ & $-$ \\
    & 11.3  & $3$ & $3$ & $\left\{*177146*\right\}$ & $-$ \\
    & 11.4 & $3$ & $3$ & $\left\{*177146*\right\}$ & $-$ \\
    & 11.5  & $3$ & $3$ & $\left\{*177146*\right\}$ & $-$ \\
    & 11.6  & $3$ & $3$ & $\left\{*177146*\right\}$ & $-$ \\
    & 11.7  & NA & NA & $\left\{*177146*\right\}$ & $-$ \\
    & 11.8 & NA & NA & $\left\{*177146*\right\}$ & $-$ \\
    & 11.9 & NA & NA & $\left\{*177146*\right\}$ & $-$ \\
    & 11.10 & NA & NA & $\left\{*177146*\right\}$ & $-$ \\
    & 11.11 & $3$ & $3$ & $-$ & $-$ \\
    & 11.12 & $3$ & $3$ & $-$ & $-$ \\
\end{xltabular}
\renewcommand{\arraystretch}{1}
\setlength{\tabcolsep}{6pt}


\subsection{Orbits}

\renewcommand{\arraystretch}{1.2}
\setlength{\tabcolsep}{5pt}
\begin{xltabular}{\linewidth}{|c|c|>{\centering\arraybackslash}X|}
    \caption{Right orbit representatives for the classes in Table \ref{tab:InequivalentRepresentatives}.}
    \label{tab:OrbitsRepresentatives}\\
    \hline
    \textbf{n} & $\mathbf{N^{O}}$ & Orbit representatives ($\alpha^i$) \\
    \hline
    \endhead
    \hline
    \endfoot
    \hline
    4 & 4.3 & $0, 1$ \\
    \hline
    \multirow{5}{*}{5} & 5.5 & 0, 1, 2, 4, 5, 7, 8, 10, 11, 13, 16, 17, 19, 20, 22, 25, 26, 31, 34, 35, 38, 40, 61, 67, 76 \\
    & 5.6 & same as 5.5 \\
    & 5.7 & $0,1,2$ \\
    & 5.8 & same as 5.5 \\
    \hline
    \multirow{14}{*}{6} 
    & 6.5 & 0, 1, 4, 7, 8, 11, 14 \\
    & 6.6 & 0, 1, 2 \\
    & 6.7 & 0, 1, 2, 7 \\
    & 6.8 & 0, 1, 2, 3, 4, 5, 6, 8, 10, 15, 17, 20 \\
    & 6.9 & 0, 1, 2, 3, 4, 5, 6, 7, 8, 9, 10, 12, 13, 16, 17, 19, 22, 23, 31, 34, 35, 36, 38, 39, 44, 45, 47, 48, 50, 54, 66, 72, 90 \\
    & 6.10 & 0, 1, 2, 6, 8, 13, 15 \\
    & 6.11 & Table \ref{tab:RepresentativesTooManyOrbits} \\
    & 6.12 & Table \ref{tab:RepresentativesTooManyOrbits} \\
    & 6.13 & Table \ref{tab:RepresentativesTooManyOrbits} \\
    & 6.14 & Table \ref{tab:RepresentativesTooManyOrbits} \\
    & 6.15 & Table \ref{tab:RepresentativesTooManyOrbits} \\
    & 6.16 & Table \ref{tab:RepresentativesTooManyOrbits} \\
    & 6.17 & Table \ref{tab:RepresentativesTooManyOrbits} \\
    \hline
\pagebreak
    \multirow{2}{*}{7}
    & 7.7 & Table \ref{tab:RepresentativesTooManyOrbits} \\
    & 7.8 & Table \ref{tab:RepresentativesTooManyOrbits} \\
    \hline
    \multirow{6}{*}{8}
    & 8.5 & 0, 1, 2, 4, 7, 8 \\
    & 8.6 & 0, 1, 2, 4, 7, 8 \\
    & 8.7 & 0, 1, 2, 4, 5, 7 \\
    & 8.8 & 0, 1, 2, 4, 10, 11, 13, 16, 17, 28, 35, 41 \\
    & 8.9 & Table \ref{tab:RepresentativesTooManyOrbits} \\
    & 8.10 & Table \ref{tab:RepresentativesTooManyOrbits} \\
    \hline
    \multirow{18}{*}{10}
    & 10.8  & 0, 1, 4, 5, 7, 10, 11, 14, 16, 19, 20, 22, 26, 31, 34, 38, 40, 41, 49, 55, 65, 76, 82, 91, 104, 122, 133 \\
    & 10.9  & 0, 1, 2, 4, 7, 8, 10, 13, 14, 16, 19, 20, 23, 25, 26, 32, 34, 38, 40, 41, 44, 55, 61, 86, 122, 125, 188  \\
    & 10.12 & 0, 1, 2, 4, 5, 7, 8, 10, 11, 13, 14, 16, 17, 19, 20, 22, 23, 31, 32, 34, 38, 40, 41, 43, 47, 61, 122 \\
    & 10.13 & Same as 10.9 \\
    & 10.14 & 0, 1, 2, 3, 4, 5, 6, 7, 8, 11, 16, 20, 21, 23, 28, 29, 31, 34, 38, 42, 43, 48, 53, 64, 79, 183, 192 \\
    & 10.15 & Same as 10.12 \\
    & 10.16 & 0, 1, 2, 3, 4, 5, 6, 8, 10, 11, 12, 13, 17 \\
    & 10.17 & 0, 1, 2, 4, 5, 7, 8, 10, 11, 16, 17, 19, 20, 61 \\
    & 10.18 & same as 10.16 \\
    & 10.19 & same as 10.17 \\
    & 10.20 & same as 10.17 \\
    & 10.21 & 0, 1, 2, 3, 4, 6, 7, 8, 10, 13, 42, 44, 51 \\
    & 10.22 & same as 10.17 \\
    & 10.23 & 0, 1, 2, 3, 4, 6, 7, 10, 12, 13, 14, 15, 19 \\
\end{xltabular}
\renewcommand{\arraystretch}{1}
\setlength{\tabcolsep}{6pt}

\renewcommand{\arraystretch}{1.2}
\setlength{\tabcolsep}{5pt}
\begin{xltabular}{\linewidth}{|c|X|}
    \caption{Right orbits representatives of the classes missing from Table \ref{tab:OrbitsRepresentatives}}
    \label{tab:RepresentativesTooManyOrbits}
    \\
    \hline
    $\mathbf{N^{O}}$ & Orbit representatives ($\alpha^i$) \\
    \hline
    \endhead
    \hline
    \endfoot
    \hline
    \multirow{4}{*}{\Centerstack{6.11 \\ to \\6.17}}& 0, 1, 2, 3, 4, 5, 6, 7, 8, 10, 11, 12, 13, 14, 15, 16, 17, 19, 20, 23, 24, 26, 28, 29, 30, 31, 32, 33, 35, 37, 38, 39, 40, 46, 47, 48, 49, 51, 53, 55, 56, 57, 58, 60, 69, 71, 73, 74, 76, 78, 80, 91, 92, 94, 96, 98, 101, 114, 119, 121, 137, 139 \\
    \hline
    \multirow{9}{*}{\Centerstack{7.7\\7.8}} & 0, 1, 2, 4, 5, 7, 8, 10, 11, 13, 14, 16, 17, 19, 20, 22, 23, 25, 26, 28, 29, 31, 32, 34, 35, 37, 38, 40, 43, 44, 46, 47, 49, 50, 52, 53, 55, 56, 58, 59, 61, 62, 64, 65, 67, 70, 71, 73, 74, 76, 77, 79, 80, 85, 86, 88, 89, 91, 92, 94, 97, 98, 100, 101, 103, 104, 106, 107, 110, 112, 113, 115, 116, 118, 119, 121, 137, 139, 142, 143, 145, 146, 148, 151, 152, 154, 155, 157, 160, 161, 169, 170, 172, 173, 175,  178, 179, 181, 182, 184, 187, 188, 193, 196, 197, 199, 200, 202, 211, 214, 215, 220, 223, 224, 226, 227,  229, 233, 235, 236, 238, 241, 242, 274, 277, 278, 281, 283, 295, 296, 301, 304, 305, 308, 310, 317, 319,  322, 323, 337, 344, 346, 349, 350, 358, 359, 362, 364, 547, 553, 562, 565, 589, 592, 607, 688, 715 \\
    \hline
    \multirow{28}{*}{8.9} & 0, 1, 2, 3, 4, 5, 6, 7, 8, 9, 10, 11, 12, 13, 14, 15, 16, 17, 18, 19, 20, 21, 22, 23, 24, 25, 26, 27, 28, 29, 30, 31, 32, 33, 35, 36, 37, 38, 39, 40, 41, 42, 43, 44, 45, 46, 47, 48, 49, 50, 51, 52, 53, 54, 55, 56, 57, 58, 59, 60, 62, 63, 64, 65, 66, 67, 68, 69, 70, 72, 73, 75, 76, 78, 79, 81, 82, 83, 84, 85, 86, 87, 88, 89, 90, 91, 92, 93, 94, 95, 96, 97, 99, 100, 101, 102, 103, 104, 105, 107, 109, 110, 111, 112, 113, 114, 115, 117, 118, 119, 120, 121, 122, 123, 125, 126, 127, 129, 130, 131, 132, 133, 134, 135, 136, 137, 139, 142, 143, 146, 147, 148, 149, 150, 151, 153, 154, 155, 157, 158, 159, 161, 162, 163, 164, 165, 166, 167, 171, 172, 173, 174, 175, 177, 178, 179, 181, 182, 183, 185, 186, 188, 189, 190, 191, 192, 195, 196, 197, 199, 200, 201, 202, 203, 204, 205, 207, 208, 209, 210, 211, 212, 213, 214, 215, 216, 217, 219, 220, 222, 224, 225, 226, 227, 228, 231, 232, 233, 234, 235, 236, 237, 238, 239, 241, 242, 243, 244, 246, 247, 248, 249, 253, 254, 256, 257, 258, 260, 261, 263, 264, 266, 267, 269, 270, 271, 274, 275, 278, 281, 282, 284, 287, 289, 291, 292, 293, 294, 299, 304, 305, 306, 307, 308, 309, 311, 312, 314, 315, 317, 320, 321, 323, 324, 326, 329, 333, 334, 336, 340, 342, 346, 347, 349, 351, 359, 360, 361, 363, 365, 369, 372, 373, 374, 377, 378, 380, 381, 382, 383, 386, 390, 394, 396, 397, 400, 401, 402, 405, 406, 412, 414, 415, 419, 420, 427, 429, 431, 434, 435, 439, 442, 445, 447, 449, 455, 456, 457, 461, 462, 469, 470, 472, 473, 475, 477, 493, 495, 496, 497, 498, 500, 504, 506, 507, 513, 515, 519, 520, 521, 523, 525, 529, 533, 539, 540, 544, 547, 552, 557, 558, 560, 564, 565, 567, 568, 576, 583, 584, 591, 593, 598, 601, 611, 613, 620, 621, 623, 637, 641, 643, 646, 649, 652, 653, 656, 658, 659, 667, 668, 670, 679, 681, 685, 694, 697, 699, 703, 704, 716, 721, 724, 736, 775, 784, 787, 791, 796, 807, 810, 816, 822, 830, 837, 845, 855, 865, 867, 880, 933, 935, 939, 954, 975, 986, 994, 1080, 1107, 1134, 1194 \\
    \hline
    \multirow{28}{*}{8.10} & 0, 1, 2, 3, 4, 5, 6, 7, 8, 9, 10, 11, 12, 13, 14, 15, 16, 17, 18, 19, 20, 21, 22, 23, 24, 25, 26, 27, 28, 29, 30, 31, 32, 33, 34, 35, 36, 37, 38, 39, 40, 41, 42, 43, 44, 46, 47, 48, 49, 50, 51, 53, 54, 56, 57, 58, 59, 60, 61, 62, 63, 65, 66, 67, 68, 69, 70, 71, 72, 73, 74, 75, 76, 77, 78, 79, 80, 81, 82, 84, 85, 86, 87, 88, 89, 90, 92, 94, 95, 96, 97, 98, 99, 100, 101, 102, 103, 104, 105, 106, 107, 108, 111, 112, 113, 114, 115, 116, 117, 118, 120, 121, 122, 124, 125, 126, 128, 129, 132, 133, 134, 136, 138, 139, 140, 141, 143, 144, 145, 146, 147, 149, 150, 151, 152, 153, 154, 155, 156, 157, 159, 161, 162, 163, 164, 166, 167, 168, 169, 170, 171, 172, 173, 174, 176, 177, 178, 179, 180, 181, 182, 183, 184, 185, 186, 188, 189, 190, 195, 196, 197, 198, 199, 200, 204, 206, 207, 211, 212, 213, 214, 215, 216, 217, 218, 220, 221, 222, 223, 225, 226, 227, 228, 229, 232, 233, 235, 236, 237, 238, 240, 241, 242, 243, 244, 246, 249, 251, 252, 253, 254, 256, 258, 259, 260, 267, 269, 272, 275, 276, 277, 278, 281, 282, 284, 286, 288, 289, 292, 293, 294, 295, 296, 298, 300, 302, 305, 307, 311, 313, 314, 316, 318, 323, 326, 327, 328, 330, 331, 337, 338, 340, 342, 343, 346, 347, 348, 351, 353, 360, 361, 366, 368, 371, 372, 374, 376, 377, 378, 379, 380, 382, 384, 386, 395, 397, 401, 402, 404, 405, 407, 414, 415, 416, 419, 420, 422, 426, 430, 431, 436, 440, 441, 444, 446, 456, 458, 468, 469, 475, 478, 480, 491, 494, 508, 510, 511, 512, 517, 522, 524, 530, 532, 535, 536, 542, 546, 548, 550, 561, 565, 566, 567, 569, 571, 572, 573, 578, 579, 581, 582, 583, 586, 589, 590, 595, 596, 599, 600, 606, 607, 608, 610, 633, 639, 642, 643, 646, 647, 661, 669, 682, 686, 687, 688, 696, 699, 706, 710, 712, 717, 728, 736, 737, 740, 741, 747, 750, 752, 753, 756, 757, 763, 770, 787, 811, 812, 825, 830, 841, 846, 850, 851, 856, 866, 890, 901, 904, 920, 921, 927, 934, 942, 975, 976, 981, 989, 996, 1005, 1020, 1043, 1049, 1052, 1106, 1109, 1116, 1160, 1260, 1337, 1385\\
    \hline
    \multirow{28}{*}{\Centerstack{9.8\\9.9}} & \numberlist{0, 1, 2, 4, 5, 7, 8, 10, 11, 13, 14, 16, 17, 19, 20, 22, 23, 25, 26, 28, 29, 31, 32, 34, 35, 37, 38, 40, 41, 43, 44, 46, 47, 49, 50, 52, 53, 55, 56, 58, 59, 61, 62, 64, 65, 67, 68, 70, 71, 73, 74, 76, 77, 79, 80, 82, 83, 85, 86, 88, 89, 91, 92, 94, 95, 97, 98, 100, 101, 103, 104, 106, 107, 109, 110, 112, 113, 115, 116, 118, 119, 121, 124, 125, 127, 128, 130, 131, 133, 134, 136, 137, 139, 140, 142, 143, 145, 146, 148, 149, 151, 152, 154, 155, 157, 158, 160, 161, 163, 164, 166, 167, 169, 170, 172, 173, 175, 176, 178, 179, 181, 182, 184, 185, 187, 188, 190, 191, 193, 194, 196, 197, 199, 200, 202, 205, 206, 208, 209, 211, 212, 214, 215, 217, 218, 220, 221, 223, 224, 226, 227, 229, 230, 232, 233, 235, 236, 238, 239, 241, 242, 247, 248, 250, 251, 253, 254, 256, 257, 259, 260, 262, 263, 265, 266, 268, 269, 271, 272, 274, 275, 277, 278, 280, 281, 283, 286, 287, 289, 290, 292, 293, 295, 296, 298, 299, 301, 302, 304, 305, 307, 308, 310, 311, 313, 314, 316, 317, 319, 320, 322, 323, 326, 328, 329, 331, 332, 334, 335, 337, 338, 340, 341, 343, 344, 346, 347, 349, 350, 352, 353, 355, 356, 358, 359, 361, 362, 364, 379, 380, 382, 383, 385, 386, 388, 389, 391, 394, 395, 397, 398, 400, 401, 403, 404, 407, 409, 410, 412, 413, 415, 416, 418, 421, 422, 424, 425, 427, 428, 430, 431, 433, 434, 436, 437, 439, 440, 442, 443, 445, 448, 449, 451, 452, 454, 455, 457, 458, 460, 461, 463, 464, 466, 467, 469, 470, 472, 475, 476, 478, 479, 481, 482, 484, 485, 493, 494, 496, 497, 499, 502, 503, 505, 506, 508, 509, 511, 512, 514, 515, 517, 518, 520, 521, 523, 524, 526, 529, 530, 532, 533, 535, 536, 538, 539, 541, 542, 544, 545, 547, 548, 550, 551, 553, 556, 557, 559, 560, 562, 563, 565, 566, 571, 572, 574, 575, 577, 578, 580, 583, 584, 586, 587, 589, 590, 592, 593, 595, 596, 598, 599, 601, 602, 604, 605, 607, 616, 617, 619, 620, 622, 623, 625, 626, 628, 629, 631, 632, 634, 637, 638, 640, 641, 643, 644, 646, 647, 652, 653, 655, 656, 658, 659, 661, 664, 665, 667, 668, 670, 671, 673, 674, 676, 677, 679, 680, 682, 683, 685, 686, 688, 692, 694, 695, 697, 698, 700, 701, 703, 704, 706, 707, 709, 710, 712, 713, 715, 718, 719, 721, 722, 724, 725, 727, 728, 757, 758, 760, 761, 763, 764, 766, 767, 769, 773, 775, 776, 778, 779, 781, 782, 785, 787, 788, 790, 791, 793, 794, 796, 799, 800, 802, 803, 805, 806, 808, 809, 814, 815, 817, 818, 820, 821, 823, 826, 827, 829, 830, 832, 833, 835, 836, 839, 841, 842, 844, 845, 847, 848, 850, 862, 863, 866, 868, 869, 871, 872, 874, 875, 877, 880, 881, 883, 884, 886, 887, 889, 890, 895, 896, 898, 899, 901, 902, 904, 907, 908, 910, 911, 913, 914, 916, 917, 920, 922, 923, 925, 926, 928, 929, 931, 937, 938, 940, 941, 943, 944, 947, 949, 950, 952, 953, 955, 956, 958, 961, 962, 964, 965, 967, 968, 970, 971, 985, 988, 989, 991, 992, 994, 995, 997, 998, 1001, 1003, 1004, 1006, 1007, 1009, 1010, 1012, 1018, 1019, 1021, 1022, 1024, 1025, 1028, 1030, 1031, 1033, 1034, 1036, 1037, 1039, 1042, 1043, 1045, 1046, 1048, 1049, 1051, 1052, 1058, 1060, 1061, 1063, 1064, 1066, 1069, 1070, 1072, 1073, 1075, 1076, 1078, 1079, 1082, 1084, 1085, 1087, 1088, 1090, 1091, 1093, 1231, 1232, 1234, 1235, 1237, 1240, 1241, 1244, 1246, 1249, 1250, 1252, 1253, 1255, 1261, 1262, 1264, 1267, 1268, 1271, 1273, 1276, 1277, 1279, 1280, 1282, 1285, 1286, 1288, 1289, 1291, 1294, 1295, 1303, 1304, 1306, 1307, 1309, 1312, 1313, 1315, 1316, 1318, 1321, 1322, 1325, 1327, 1330, 1331, 1333, 1334, 1336, 1354, 1357, 1358, 1360, 1361, 1363, 1366, 1367, 1369, 1370, 1372, 1375, 1376, 1384, 1385, 1387, 1388, 1390, 1393, 1394, 1396, 1397, 1399, 1402, 1403, 1406, 1408, 1411, 1412, 1414, 1415, 1417, 1424, 1426, 1429, 1430, 1433, 1435, 1438, 1439, 1441, 1442, 1444, 1447, 1448, 1450, 1451, 1453, 1456, 1457, 1514, 1516, 1519, 1520, 1522, 1523, 1525, 1528, 1529, 1531, 1532, 1534, 1537, 1538, 1546, 1547, 1549, 1550, 1552, 1555, 1556, 1558, 1559, 1561, 1564, 1565, 1570, 1573, 1574, 1576, 1577, 1579, 1600, 1601, 1603, 1604, 1606, 1609, 1610, 1612, 1613, 1615, 1618, 1619, 1627, 1628, 1630, 1631, 1633, 1636, 1637, 1639, 1640, 1642, 1645, 1646, 1651, 1654, 1655, 1657, 1658, 1660, 1669, 1672, 1673, 1678, 1681, 1682, 1684, 1685, 1687, 1690, 1691, 1693, 1694, 1696, 1699, 1700, 1723, 1726, 1727, 1732, 1735, 1736, 1738, 1739, 1741, 1750, 1753, 1754, 1759, 1762, 1763, 1765, 1766, 1768, 1771, 1772, 1774, 1775, 1777, 1780, 1781, 1790, 1792, 1793, 1795, 1798, 1799, 1801, 1802, 1804, 1807, 1808, 1813, 1816, 1817, 1819, 1820, 1822, 1894, 1897, 1898, 1900, 1901, 1903, 1912, 1915, 1916, 1921, 1924, 1925, 1927, 1928, 1930, 1934, 1936, 1937, 1939, 1942, 1943, 1969, 1970, 1975, 1978, 1979, 1981, 1982, 1984, 1993, 1996, 1997, 2002, 2005, 2006, 2008, 2009, 2011, 2015, 2017, 2018, 2020, 2023, 2024, 2035, 2036, 2038, 2042, 2044, 2045, 2047, 2050, 2051, 2056, 2059, 2060, 2062, 2063, 2065, 2092, 2096, 2098, 2099, 2101, 2104, 2105, 2116, 2117, 2119, 2123, 2125, 2126, 2128, 2131, 2132, 2137, 2140, 2141, 2143, 2144, 2146, 2158, 2159, 2164, 2167, 2168, 2170, 2171, 2173, 2177, 2179, 2180, 2182, 2185, 2186, 2461, 2464, 2465, 2468, 2470, 2482, 2483, 2488, 2491, 2492, 2495, 2497, 2501, 2504, 2506, 2509, 2510, 2522, 2524, 2528, 2531, 2533, 2536, 2537, 2542, 2545, 2546, 2549, 2551, 2650, 2653, 2654, 2657, 2659, 2666, 2668, 2671, 2672, 2707, 2708, 2711, 2713, 2725, 2726, 2731, 2734, 2735, 2738, 2740, 2747, 2749, 2752, 2753, 2767, 2774, 2776, 2779, 2780, 2785, 2788, 2789, 2792, 2794, 2830, 2833, 2834, 2848, 2855, 2857, 2860, 2861, 2866, 2869, 2870, 2873, 2875, 2888, 2893, 2896, 2897, 2900, 2902, 2909, 2911, 2914, 2915, 3028, 3031, 3032, 3035, 3037, 3076, 3077, 3091, 3098, 3100, 3103, 3104, 3112, 3113, 3116, 3118, 3139, 3140, 3143, 3145, 3152, 3154, 3157, 3158, 3199, 3220, 3221, 3224, 3226, 3233, 3235, 3238, 3239, 3260, 3262, 3265, 3266, 3274, 3275, 3278, 3280, 4921, 4927, 4936, 4939, 4963, 4966, 4975, 4981, 5044, 5047, 5056, 5062, 5089, 5098, 5101, 5299, 5305, 5332, 5341, 5344, 5413, 5422, 5425, 5452, 5467, 6151, 6196, 6439, 6520\\} \\
    \hline
\end{xltabular}
\renewcommand{\arraystretch}{1}
\setlength{\tabcolsep}{6pt}

\newpage
\subsection{Expansion Searches}

\renewcommand{\arraystretch}{1.2}
\setlength{\tabcolsep}{6pt}
\begin{xltabular}{\linewidth}{|c|c|c|c|c|c|c|c|c|c|c|c|}
    \caption{Expansion Searches for monomials $M(x)$, with coefficients over $\F_{3^m}$.}
    \label{tab:searches_times}
    \\
    \hline
    \multirow{3}{*}{n} & \multirow{3}{*}{$M$} & \multicolumn{10}{c|}{Expansion Terms} \\ \cline{3-12}
    & & \multicolumn{2}{c|}{1} & \multicolumn{2}{c|}{2} & \multicolumn{2}{c|}{3} & \multicolumn{2}{c|}{4} & \multicolumn{2}{c|}{5} \\
    \cline{3-12}
    & & m & time & m & time & m & time & m & time & m & time \\
    \hline
    \endhead

    \multirow{2}{*}{6} & $x^2$ & 6 & 0.1s & 6 & 41m & 6 & 535.5h & 3 & 33.7h & 2 & 411.1h \\
    & $x^{10}$ & 6 & 1.8s & 6 & 63.1h & 3 & 33.1h & 2 & 29.32h & 1 & 2h \\
    \hline
    \multirow{4}{*}{7} & $x^2$ & 7 & 10ms
    & 7 & 8m & - & - & - & - & - & - \\
    & $x^4$ & 7 & 5.6s & 7 & 34.4h & - & - & - & - & - & - \\
    & $x^{10}$ & 7 & 2.8m & 7 & 455.7h & - & - & - & - & - & - \\
    & $x^{28}$ & 7 & 4.2m & 7 & 589.9 h & - & - & - & - & - & - \\
    \hline
    \multirow{5}{*}{8} & $x^2$ & 8 & 19s & 4 & 21.3m & 4 & 48.4h & 2 & 15.24h & 1 & 3.2h \\
    & $x^4$ & 8 & 90ms & 8 & 29.6h & 4 & 60.7h & 2 & 45.7h & 1 & 10.4h \\
    & $x^{10}$ & 8 & 0.1s & 8 & 5.4h & 4 & 52.9h & 2 & 48.5h & 1 & 4.5h \\
    & $x^{28}$ & 8 & 0.1s & 8 & 352.4h & 4 & 323.5h & 2 & 55.6h & 1 & 4.7h \\
    & $x^{82}$ & 8 & 36s & 8 & 42.4h & 2 & 148.4h & 1 & 131.7h & 1 & 304.8h\\
    \hline
    \multirow{6}{*}{9} & $x^2$ & 1 & 120ms & 1 & 13.6h & 1 & 42.9h & 1 & 185.2h & - & - \\
    & $x^4$     & 1 & 38s & 1 & 24.3m & 1 & 277.5h & 1 & 85.5h  & - & - \\
    & $x^{10}$  & 1 & 92s & 1 & 1.1h  & 1 & $>$ 300h & 1 & 205.1h  & - & - \\
    & $x^{28}$  & 1 & 8s  & 1 & 9.1m  & 1 & 87.91h & 1 & 168.6h & - & - \\
    & $x^{82}$  & 1 & 10m & 1 & 1.7h  & 1 & $>$ 300h & 1 & 93.9h   & - & - \\
    & $x^{244}$ & 1 & 11m & 1 & 1.9h  & 1 & 294.3h & 1 & 89.9h  & - & - \\
    \hline
\end{xltabular}
\renewcommand{\arraystretch}{1}
\setlength{\tabcolsep}{6pt}

\end{document}